\documentclass[english]{article}
\usepackage[T1]{fontenc}
\usepackage[latin9]{inputenc}
\usepackage{babel}
\usepackage{amsthm}
\usepackage{amsmath}
\usepackage{amssymb}
\usepackage{esint}
\usepackage[unicode=true,pdfusetitle,
 bookmarks=true,bookmarksnumbered=false,bookmarksopen=false,
 breaklinks=false,pdfborder={0 0 1},backref=false,colorlinks=false]
 {hyperref}

\makeatletter

\newcommand{\noun}[1]{\textsc{#1}}

\theoremstyle{plain}
\newtheorem{thm}{\protect\theoremname}
  \theoremstyle{definition}
  \newtheorem{defn}[thm]{\protect\definitionname}
  \theoremstyle{remark}
  \newtheorem{rem}[thm]{\protect\remarkname}
  \theoremstyle{plain}
  \newtheorem{lem}[thm]{\protect\lemmaname}
  \theoremstyle{plain}
  \newtheorem{cor}[thm]{\protect\corollaryname}

\def\LyX{\texorpdfstring{%
  L\kern-.1667em\lower.25em\hbox{Y}\kern-.125emX\@}
  {LyX}}

\makeatother

\usepackage{babel}

\makeatother

\usepackage{babel}

\makeatother

\usepackage{babel}

\date{}

\makeatother

  \providecommand{\corollaryname}{Corollary}
  \providecommand{\definitionname}{Definition}
  \providecommand{\lemmaname}{Lemma}
  \providecommand{\remarkname}{Remark}
\providecommand{\theoremname}{Theorem}

\begin{document}

\title{\textbf{\noun{On Hofer Energy of $J$-holomorphic Curves for Asymptotically
Cylindrical $J$}}}

\author{Erkao Bao}
\maketitle
\begin{abstract}
In this paper, we provide a bound for the generalized Hofer energy
of punctured $J$-holomorphic curves in almost complex manifolds with
asymptotically cylindrical ends. As an application, we prove a version
of Gromov's Monotonicity Theorem with multiplicity. Namely, for a
closed symplectic manifold $(M,\omega')$ with a compatible almost
complex structure $J$ and a ball $B$ in $M,$ there exists a constant
$\hbar>0,$ such that any $J$-holomorphic curve $\tilde{u}$ passing
through the center of $B$ for $k$ times (counted with multiplicity)
with boundary mapped to $\partial B$ has symplectic area $\int_{\tilde{u}^{-1}(B)}\tilde{u}^{*}\omega'>k\hbar,$
where the constant $\hbar$ depends only on $(M,\omega',J)$ and the
radius of $B.$ As a consequence, the number of times that any closed
$J$-holomorphic curve in $M$ passes through a point is bounded by
a constant depending only on $(M,\omega',J)$%
\footnote{Following the notation in \cite{compactness} we save $\omega$ for
something else.%
} and the symplectic area of $\tilde{u}$. Here $J$ is any $\omega'-$compatible
smooth almost complex structure on $M$. In particular, we do not
require $J$ to be integrable.\end{abstract}
\begin{verse}
\textbf{Key words}. Asymptotically cylindrical, stable hamiltonian
structure, $J$-holomorphic curve, Hofer energy, Gromov's Monontonicity
Theorem, Holomorphic building.
\end{verse}

\section{Introduction}

Hofer energy is introduced in \cite{Hofer Weinstein conjecture} for
$J$-holomorphic curves in symplectization of contact manifolds, and
is generalized in \cite{compactness} for $J-$holomorphic curves
in the ``almost complex manifolds with cylindrical ends''. Here
``cylindrical'' means that the almost complex structure $J$ is
invariant under translation. Hofer energy plays an essential role
in the study of $J$-holomorphic curves in Symplectic Field Theory
mainly because of the following two properties: (A) the asymptotic
behavior of a $J$-holomorphic curve in a noncompact symplectic manifold
can be controlled by requiring its Hofer energy to be finite, and
hence a uniform Hofer energy bound gives a Symplectic Field Theory
type of compactification of moduli spaces of $J-$holomorphic curves;
on the other hand, (B) a uniform Hofer energy bound can be obtained
by specifying the behavior the $J$-holomorphic curves at infinity
and bounding their symplectic areas (see \cite{Hofer Weinstein conjecture,compactness}).
In \cite{asympt} the notion of Hofer energy and Property (A) are
further generalized to include $J$-holomorphic curves in ``almost
complex manifolds with asymptotically cylindrical ends''. Here ``asymptotically
cylindrical'' means that the difference between the almost complex
structure $J$ and a translation invariant one is exponentially small.
In this paper, we prove Property (B) in this setting. Property (A)
and property (B) together imply the expected useful compactness results
in Symplectic Field Theory. 

One of the main advantages of this generalization is that the asymptotically
cylindrical $J$ arises naturally. As an application, we prove a version
of Gromov's Monotonicity Theorem with multiplicity%
\footnote{This can also be derived from \cite{fish}. See Remark \ref{rmk: Fish}%
}, namely for a closed symplectic manifold $(M,\omega')$ with a compatible
almost complex structure $J$ and a ball $B$ in $M,$ there exists
a constant $\hbar>0,$ such that any $J$-holomorphic curve $\tilde{u}$
passing through the center of $B$ $k$ times (counted with multiplicity)
with the boundary mapped to $\partial B$ has symplectic area $\int_{\tilde{u}^{-1}(B)}\tilde{u}^{*}\omega'>k\hbar,$
where the constant $\hbar$ depends only on $(M,\omega',J)$ and the
radius of $B.$ 

The inequality $k<\frac{1}{\hbar}\int_{\tilde{u}^{-1}(B)}\tilde{u}^{*}\omega'$
is closely related to a question asked in \cite{cieliebak}, where
they study $J$-holomorphic curves with boundaries lying inside two
clean intersecting Lagrangian submanifolds, and prove that the number
of ``boundary switches'' at the intersecting loci is uniformly bounded
by the Hofer Energy. Their proof in an essential way relies on the
additional requirement that the almost complex structure $J$ is integrable
near the intersecting loci. They ask to what extent their results
are still true without assuming the integrability of $J.$ In this
paper, we provide a simple proof for the closed version of their result
for arbitrary $J$. Namely, the $J$-holomorphic curves we consider
in this paper have no boundaries. In this case, ``boundary switches''
just means that the $J$-holomorphic curve passes a fixed point in
$M.$ Furthermore, the analysis developed in \cite{asympt} and this
paper can be carried out to include Lagrangians without difficulty
(see for example section 5 in \cite{asympt} for the setup).

\subsection*{Acknowledgment}

I would like to thank Garrett Alston, Xianghong Gong, Conan Leung,
Yong-Geun Oh, and Dietmar A. Salamon for helpful discussions. I would
like to thank the anonymous referee for the instructive and critical
comments. In particular, they suggest me to include Theorem \ref{thm: hofer's energy bd by symplectic area-1}.
I would thank Conan Leung for providing me such a great opportunity
to visit Institute of Mathematical Sciences at the Chinese University
of Hong Kong, where I completed this paper.

\section{\label{sec:Asymptotically...}Asymptotically cylindrical almost complex
structure}

Let $V_{-}$ be a smooth closed oriented manifold of dimension $2N-1$,
and $J$ be a smooth almost complex structure on $W_{-}=\mathbb{R}^{-}\times V_{-}$
such that the orientation of $W_{-}$ induced from $J$ conincides
with the one induced from the standard of orientation of $\mathbb{R}^{-}$
and the orientation of $V_{-}.$ Let $\mathbf{R}$ be the smooth vector
field on $W_{-}$ defined by $\mathbf{R}:=J\left(\frac{\partial}{\partial r}\right),$
and $\xi$ be the subbundle of the tangent bundle $TW_{-}$ defined
by $\xi_{(r,v)}=\left(JT_{v}\left(\{r\}\times V_{-}\right)\right)\cap\left(T_{v}\left(\{r\}\times V_{-}\right)\right)$,
for $(r,v)\in W_{-}$. Then the tangent bundle $TW_{-}$ splits as
$TW_{-}=\mathbb{R}(\frac{\partial}{\partial r})\oplus\mathbb{R}(\mathbf{R})\oplus\xi$.
Define the 1-forms $\lambda$ and $\sigma$ on $W_{-}$ respectively
by 
\begin{equation}
\begin{array}{ccccc}
\lambda(\xi)=0 &  & \lambda(\frac{\partial}{\partial r})=0 &  & \lambda\left(\mathbf{R}\right)=1,\end{array}\label{eq:define lambda}
\end{equation}
\begin{equation}
\begin{array}{ccccc}
\sigma(\xi)=0 &  & \sigma(\frac{\partial}{\partial r})=1 &  & \sigma\left(\mathbf{R}\right)=0.\end{array}\label{eq:define sigma}
\end{equation}

Let $f_{s}:W_{-}\to W_{-}$ be the translation $f_{s}(r,v)=(r+s,v),$
for $s\leqq0.$ We call a tensor on $W_{-}$ translationally invariant
if it is invariant under $f_{s}$.
\begin{defn}
\label{def: asympt cylindrical}Under the above notations, $J$ is
called asymptotically cylindrical at negative infinity, if $J$ satisfies
(ACC1)-(ACC5):\end{defn}
\begin{itemize}
\item (ACC1) There exist a smooth translationally invariant almost complex
structure $J_{-\infty}$ on $W_{-}$ and constants $K_{l},\delta_{l}>0$,
such that restricted to the region $(-\infty,r]\times V_{-}$ 
\begin{equation}
\left\Vert J-J_{-\infty}\right\Vert _{l}\leqq K_{l}e^{\delta_{l}r}
\end{equation}
 for all $r\leqq0$ and $l\in\mathbb{Z}_{\geqq0}$, where $\left\Vert \cdot\right\Vert _{k}$
is the $\textrm{C}^{k}$-norm defined by $\left\Vert \varphi\right\Vert _{k}:=\underset{w}{\sup}\sum_{i=0}^{k}\left|\nabla^{i}\varphi(w)\right|$
and $|\cdot|$ is computed using a translationally invariant metric
$g_{W_{-}}$ on $W_{-}$, for example $g_{W_{-}}=dr^{2}+g_{V_{-}},$
and $\nabla$ is the corresponding Levi-Civita connection. 
\item (ACC2) $i(\mathbf{R}_{-\infty})d\lambda_{-\infty}=0,$ where $\mathbf{R}_{-\infty}:=\underset{s\to-\infty}{\lim}f_{s}^{*}\mathbf{R}$,
$\lambda_{-\infty}:=\underset{s\to-\infty}{\lim}f_{s}^{*}\lambda$,
and both limits exist by (ACC1).
\item (ACC3) $\mathbf{R}_{-\infty}(r,v)\in T_{v}(\{r\}\times V_{-})$, i.e.
$\mathbf{R}_{-\infty}$ is tangent to the level sets. 
\end{itemize}
There exists a translationally invariant closed 2-form $\omega_{-\infty}$
on $W_{-}$ such that 
\begin{itemize}
\item (ACC4) $i\left(\frac{\partial}{\partial r}\right)\omega_{-\infty}=0=i(\mathbf{R}_{-\infty})\omega_{-\infty}.$
\item (ACC5) $\omega_{-\infty}|_{\xi_{-\infty}}(\cdot,J_{-\infty}\cdot)$
is a metric on $\xi_{-\infty}:=\underset{s\to-\infty}{\lim}f_{s}^{*}\xi.$ 
\end{itemize}
When we say $J$ is asymptotically cylindrical, we choose $\omega_{-\infty}$
without mentioning. 

Similarly, we could define the notion of $J$ being asymptotically
cylindrical at positive infinity for $W_{+}=\mathbb{R}^{+}\times V_{+}$. 

Notice that this definition is equivalent to the definition given
in \cite{asympt}. In \cite{asympt} for $J$ being asymptotically
cylindrical, besides (ACC1)-(ACC5) we require that there exists a
2-form $\omega$ on $W_{-}$ such that
\begin{itemize}
\item (a) $i\left(\frac{\partial}{\partial r}\right)\omega=0=i(\mathbf{R})\omega.$
\item (b) $\omega|_{\xi}(\cdot,J\cdot)$ is a metric on $\xi.$ 
\item (c) There exist constants $K_{l},\delta_{l}\geqq0$, such that
\begin{equation}
\left\Vert \left.\left(\omega-\omega_{-\infty}\right)\right|_{(-\infty,r]\times V_{-}}\right\Vert _{l}\leqq K_{l}e^{\delta_{l}r}\label{eq:omega-omega -infty is small}
\end{equation}
 for all $r\leqq0$ and $l\in\mathbb{Z}_{\geqq0}$.
\end{itemize}
Indeed, take 
\[
\omega(x,y)=\frac{1}{2}\left[\omega_{-\infty}(\pi_{\xi}x,\pi_{\xi}y)+\omega_{-\infty}(J\pi_{\xi}x,J\pi_{\xi}y)\right]
\]
 for $x,y\in T_{(r,v)}W^{-}.$ Then (a) is satisfied. From (ACC1)
and (ACC4) we can see that (c) is satisfied. Notice 

\begin{eqnarray*}
\omega(Jx,Jy) & = & \frac{1}{2}\left[\omega_{-\infty}(\pi_{\xi}Jx,\pi_{\xi}Jy)+\omega_{-\infty}(J\pi_{\xi}Jx,J\pi_{\xi}Jy)\right]\\
 & = & \frac{1}{2}\left[\omega_{-\infty}(J\pi_{\xi}x,J\pi_{\xi}y)+\omega_{-\infty}(\pi_{\xi}x,\pi_{\xi}y)\right]\\
 & = & \omega(x,y).
\end{eqnarray*}
Hence $\omega|_{\xi}(\cdot,J\cdot)$ is symmetric. For $x\in\xi_{(r,v)},$
we have 
\begin{eqnarray*}
\omega(x,Jx) & = & \frac{1}{2}\left[\omega_{-\infty}(\pi_{\xi}x,\pi_{\xi}Jx)+\omega_{-\infty}(J\pi_{\xi}x,J\pi_{\xi}Jx)\right]\\
 & = & \frac{1}{2}\left[\omega_{-\infty}(x,Jx)+\omega_{-\infty}(Jx,-x)\right]\\
 & = & \omega_{-\infty}(x,Jx).
\end{eqnarray*}
Because that $\omega_{-\infty}(x,J_{-\infty}x)$ is positive on every
nonzero vector $x\in\xi_{-\infty},$ we have $\omega(\cdot,J_{-\infty}\cdot)|_{S}>\varpi>0,$
for some $\varpi,$ where 

\[
S:=\left\{ \left.(x,y)\in\xi_{-\infty}\times\xi_{-\infty}\right|\left\Vert x\right\Vert _{g_{W_{-}}}=1,y=J_{-\infty}x\right\} .
\]
 When $r$ is sufficiently negative, by (ACC1), $(x,Jx)$ is uniformly
close to $S,$ for all $x\in\xi_{(r,v)}.$ Therefore, for $0\neq x\in\xi_{(r,v)}$,
we obtain $\omega(x,Jx)=\omega_{-\infty}(x,Jx)>0,$ and hence (b).
Since we restrict ourselves to the behaviors of $J$-holomorphic curves
near infinity, for the purpose of simplifying the notations, we assume
$\omega$ satisfies (b) for $r\leqq0.$
\begin{rem}
(ACC1)-(ACC5) imply that $(V_{-},\omega_{-\infty})$ is a stable hamiltonian
structure and $(\lambda_{-\infty},J_{-\infty})$ is a framing of the
stable hamiltonian structure (See \cite{stable hamiltonian} for the
definition of stable hamiltonian structure. In this paper we do not
need it).\end{rem}
\begin{defn}
We say an asymptotically cylindrical almost complex structure $J$
is of contact type if $\omega_{-\infty}=d\lambda_{-\infty}.$
\end{defn}

The following definition is the case considered in \cite{Hofer Weinstein conjecture,Finite energy plane,morse bott,compactness}.
\begin{defn}
\label{def:cylindrical almost complex}We say $J$ is a cylindrical
almost complex structure, if $J$ is an asymptotically cylindrical
almost complex structure and translationally invariant. 
\end{defn}
By (ACC2) and (ACC3) we can see that $\mathbf{R}_{-\infty}$ is a
translationally invariant vector field on $W_{-}$ and it is tangent
to each level set $\{r\}\times V_{-}$, so we can view $\mathbf{R}_{-\infty}$
as a vector field on $V_{-}$. Let $\phi^{t}$ be the flow of $\mathbf{R}_{-\infty}$
on $V_{-}$, i.e. $\phi^{t}:V_{-}\to V_{-}$ satisfies $\frac{d}{dt}\phi^{t}=\mathbf{R}_{-\infty}\circ\phi^{t}$.
Then we have 
\[
\frac{d}{dt}[(\phi^{t})^{*}\lambda_{-\infty}]=(\phi^{t})^{*}(i_{\mathbf{R}_{-\infty}}d\lambda_{-\infty}+di_{\mathbf{R}_{-\infty}}\lambda_{-\infty})=0.
\]
Thus $\phi^{t}$ preserves $\lambda_{-\infty}$ and hence $\xi_{-\infty}$.
Similarly $\phi^{t}$ preserves $\omega_{-\infty}$. 

Let's denote by $\mathcal{P}_{-}$ the set of periodic trajectories,
counting their multiples, of the vector field $\mathbf{R}_{-\infty}$
restricting to $V_{-}.$ Notice that any smooth family of periodic
trajectories from $\mathcal{P}_{-}$ have the same period by Stokes'
Theorem and (ACC2).

\begin{defn}
\label{def:Morse Bott}We say that an asymptotically cylindrical $J$
is Morse-Bott if, for every $T>0$ the subset $N_{T}\subseteq V_{-}$
formed by the closed trajectories from $\mathcal{P}_{-}$ of period
$T$ is a smooth closed submanifold of $V_{-}$, such that the rank
of $\omega_{-\infty}|_{N_{T}}$ is locally constant and $T_{p}N_{T}=\ker\left(d\phi^{T}-Id\right)_{p}$.
\end{defn}
In this paper, we assume that $J$ is Morse-Bott. The Morse-Bott condition
is the condition assumed in \cite{asympt} to guarantee Theorem \ref{thm:converge to reeb orbit},
Lemma \ref{lem: hofer energy bound for single curve} and Theorem
\ref{thm:compactification}. For the application in section \ref{sec:An-application-to},
it is easy to check that this requirement is satisfied. \\

Let $\Sigma:=\mathbb{R}^{-}\times S^{1}$ be the half cylinder with
standard almost complex structure $j$, and $\tilde{u}=(a,u):(\Sigma,j)\to(W_{-},J)$
be a $J$-holomorphic curve, i.e. $T\tilde{u}\circ j=J(\tilde{u})\circ T\tilde{u}$.
The $\omega$-energy and $\lambda$-energy of $\tilde{u}$ are defined
as follows respectively

\[
E_{\omega}(\tilde{u})=\int_{\Sigma}\tilde{u}^{*}\omega,
\]
\[
E_{\lambda}(\tilde{u})=\underset{\phi\in\mathcal{C}}{sup}\int_{\Sigma}\tilde{u}^{*}(\phi(r)\sigma\wedge\lambda),
\]
where $\mathcal{C}=\{\phi\in C^{\infty}(\mathbb{R}^{-},[0,1])|\int_{-\infty}^{0}\phi(x)dx=1\}$,
and $\lambda$ and $\sigma$ are defined as in (\ref{eq:define lambda})
and (\ref{eq:define sigma}). The Hofer energy of $\tilde{u}$ is
defined by 
\[
E(\tilde{u})=E_{\omega}(\tilde{u})+E_{\lambda}(\tilde{u}).
\]

Let's equip $\mathbb{R}^{-}\times S^{1}$ with coordinate $(s,t).$
Here we view $S^{1}$ as $\mathbb{R}/\mathbb{Z}$. It is easy to check
that $\tilde{u}^{*}\omega$ and $\tilde{u}^{*}(\phi(r)\sigma\wedge\lambda)$
are non-negative multiples of the volume form $ds\wedge dt$ on $\mathbb{R}^{-}\times S^{1}.$
Actually, 

\begin{equation}
\tilde{u}^{*}\omega=\omega(\pi_{\xi}\tilde{u}_{s},J(\tilde{u})\pi_{\xi}\tilde{u}_{s})ds\wedge dt,\label{eq:positive omega}
\end{equation}
where $\pi_{\xi}$ is the projection from $TW_{-}=\mathbb{R}(\frac{\partial}{\partial r})\oplus\mathbb{R}(\mathbf{R})\oplus\xi$
to $\xi,$ and

\begin{equation}
\tilde{u}^{*}(\phi(r)\sigma\wedge\lambda)=\phi(a)\left[\sigma(\tilde{u}_{s})^{2}+\lambda(\tilde{u}_{s})^{2}\right]ds\wedge dt.\label{eq:positive sigma wedge lambda}
\end{equation}

The non-negativity is the main reason that we choose the Hofer energy
in this form. 

The following theorem is one of the most important theorems in \cite{Hofer Weinstein conjecture,Finite energy plane,compactness,morse bott}
for the case when $J$ is cylindrical, and it is proved in the asymptotically
cylindrical setting in \cite{asympt}.
\begin{thm}
\label{thm:converge to reeb orbit} Suppose that $J$ is an asymptotically
cylindrical almost complex structure on $W_{-}=\mathbb{R}^{-}\times V_{-}$.
Let $\tilde{u}=(a,u):\mathbb{R}^{-}\times\mathbb{R}/\mathbb{Z}\to W_{-}$
be a $J$-holomorphic curve with finite Hofer energy. Suppose that
the image of $\tilde{u}$ is unbounded in $W_{-}$. Then there exists
a periodic orbit $\gamma$ of $\mathbf{R}_{-\infty}$ of period $|T|$
with $T\neq0$, such that

\begin{equation}
\underset{s\to-\infty}{\lim}u(s,t)=\gamma(Tt)\label{eq:u to gamma}
\end{equation}
\begin{equation}
\underset{s\to-\infty}{\lim}\frac{a(s,t)}{s}=T\label{eq:a to Ts}
\end{equation}
 in $C^{\infty}(S^{1})$. 
\end{thm}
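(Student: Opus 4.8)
The plan is to follow the bubbling-and-asymptotics strategy of Hofer--Wysocki--Zehnder, adapted to the present setting by exploiting (ACC1): translates of $J$ deep in the end converge in $C^\infty_{loc}$ to the translationally invariant $J_{-\infty}$, so every limiting object extracted at $-\infty$ is $J_{-\infty}$-holomorphic. First I would establish a uniform gradient bound $\sup\{|\nabla\tilde u(s,t)| : s\le -1,\ t\in S^1\}<\infty$. Suppose not; then there are $(s_n,t_n)$ with $s_n\to-\infty$ and $|\nabla\tilde u(s_n,t_n)|\to\infty$. Applying the usual Hofer rescaling (reparametrizing at the blow-up scale and translating in the $\mathbb{R}$-factor by $-a(s_n,t_n)$ so the bubble is centred), and using (ACC1) to pass from $J$ to $J_{-\infty}$, one extracts a nonconstant finite-energy $J_{-\infty}$-holomorphic plane in $\mathbb{R}\times V_-$. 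Since the $\lambda$-energy captured in arbitrarily small balls tends to $0$, this plane has vanishing $\lambda$-energy; by (ACC2)--(ACC4) and the non-negativity in (\ref{eq:positive sigma wedge lambda}) such a plane must be constant, a contradiction. Sphere bubbles are excluded the same way (they carry no $\omega$-area on the $\xi$-factor).

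Second, with the gradient bound in hand the densities in (\ref{eq:positive omega}) and (\ref{eq:positive sigma wedge lambda}) are uniformly continuous. Because $E_\omega(\tilde u)<\infty$, the non-negative density $\omega(\pi_\xi\tilde u_s,J(\tilde u)\pi_\xi\tilde u_s)$ is integrable on $(-\infty,0]\times S^1$, and uniform continuity then forces it to tend to $0$ uniformly in $t$ as $s\to-\infty$. Hence $\pi_\xi\tilde u_s\to 0$: deep in the end the derivative of $\tilde u$ concentrates in the plane spanned by $\partial/\partial r$ and $\mathbf{R}_{-\infty}$.

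Third I would identify the asymptotic period. Set $m(s):=\int_{S^1}\lambda(\tilde u_t)\,dt$. Using Stokes' theorem together with (ACC2), the relation $\sigma(\tilde u_s)=a_s$, and the vanishing of the fiber energy from the previous step, one shows $m(s)$ converges to a limit $T$ and that $a_s\to T$ uniformly; finiteness of $E_\lambda(\tilde u)$ guarantees $T$ is finite. Here the hypothesis that the image of $\tilde u$ is unbounded is used to force $T\neq 0$, since $T=0$ would keep the $a$-coordinate bounded. Then, for $s_n\to-\infty$, consider the translates $\tilde u_n(s,t):=(a(s+s_n,t)-Ts_n,\ u(s+s_n,t))$ on $[-n,0]\times S^1$. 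The gradient bound plus elliptic bootstrapping yield $C^\infty_{loc}$ subsequential limits, each of which, by (ACC1), is a $J_{-\infty}$-holomorphic cylinder of zero $\omega$-energy with $a$-component $Ts$; such a cylinder is a trivial cylinder $(Ts,\gamma(Tt))$ over a periodic orbit $\gamma$ of $\mathbf{R}_{-\infty}$ of period $|T|$. This gives (\ref{eq:a to Ts}) and subsequential versions of (\ref{eq:u to gamma}).

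The main obstacle is the last step: upgrading subsequential convergence to genuine convergence to a \emph{single} orbit $\gamma$, since a priori the curve could drift along a connected family of period-$|T|$ orbits. Ruling this out is exactly where the Morse--Bott hypothesis of Definition \ref{def:Morse Bott} enters: the condition $T_pN_T=\ker(d\phi^T-Id)_p$ means the linearized return map is nondegenerate transverse to $N_T$, so the relevant asymptotic operator has no kernel in the normal directions. Estimating the distance of $u(s,\cdot)$ to $N_T$ and deriving a differential inequality then drives this distance to $0$ and pins down a unique limiting orbit, giving (\ref{eq:u to gamma}) in $C^\infty(S^1)$ without passing to a subsequence. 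As this detailed asymptotic analysis was carried out in \cite{asympt}, I would either invoke that reference directly or reproduce the differential-inequality argument.
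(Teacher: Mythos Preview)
The paper does not actually prove Theorem~\ref{thm:converge to reeb orbit}: the sentence immediately preceding the statement attributes it to \cite{asympt}, and the appendix proof that carries the label ``(Theorem~\ref{thm:converge to reeb orbit})'' is a mislabelling --- its content is a Stokes-type estimate culminating in
\[
E_a(\tilde u)\le C\Bigl(2\sum\int\gamma_+^*\lambda_{+\infty}-\sum\int\gamma_-^*\lambda_{-\infty}\Bigr)+C'E_{symp,a}(\tilde u),
\]
which is precisely the conclusion of Theorem~\ref{thm: hofer's energy bd by symplectic area-1} (and indeed the text right after that theorem says ``The proof of this theorem is given in the appendix''). So there is no in-paper argument to compare against; your outline is the Hofer--Wysocki--Zehnder/Bourgeois strategy, which is the route taken in \cite{asympt}, and you correctly finish by deferring to that reference for the Morse--Bott asymptotic-operator step.

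One genuine slip in your sketch: in the bubbling step you have the two energies reversed. It is $E_\omega$, being an honest integral of the fixed nonnegative density (\ref{eq:positive omega}), that cannot concentrate on shrinking balls around $(s_n,t_n)$ with $s_n\to-\infty$, so the rescaled plane satisfies $E_\omega=0$, i.e.\ $\pi_{\xi_{-\infty}}d\tilde v\equiv 0$. By contrast $E_\lambda$ is a supremum over $\phi\in\mathcal{C}$, and after the translation in the $\mathbb{R}$-factor the admissible $\phi$ shifts with the curve, so there is no reason the bubble should have $E_\lambda=0$. The correct statement is that the bubble is a finite-$E_\lambda$, zero-$E_\omega$ $J_{-\infty}$-holomorphic plane whose image lies in a single leaf $\mathbb{R}\times(\text{Reeb trajectory})$, and one then argues separately that such a plane is constant. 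With that correction the rest of your plan (decay of the $\xi$-component, extraction of $T$, trivial-cylinder limits, Morse--Bott uniqueness) matches \cite{asympt}.
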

On the other hand, we have 
\begin{lem}
\label{lem: hofer energy bound for single curve} Suppose that $J$
is an asymptotically cylindrical almost complex structure on $W_{-}=\mathbb{R}^{-}\times V_{-},$
and $\tilde{u}=(a,u):\mathbb{R}^{-}\times\mathbb{R}/\mathbb{Z}\to W_{-}$
is a $J$-holomorphic curve. Suppose that there exits a periodic orbit
$\gamma$ of $\mathbf{R}_{-\infty}$ of period $|T|$ such that 

\[
\underset{s\to-\infty}{\lim}a(s,t)=-\infty,
\]
\[
\underset{s\to-\infty}{\lim}u(s,t)=\gamma(Tt).
\]
Then 
\[
\underset{s\to-\infty}{\lim}\frac{a(s,t)}{s}=T,
\]
and Hofer energy $E(\tilde{u})<\infty.$ \end{lem}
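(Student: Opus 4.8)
The plan is to show that the $C^{0}$ convergence $u(s,\cdot)\to\gamma(T\cdot)$ together with $a\to-\infty$ forces $\tilde u$ to approach the trivial cylinder over $\gamma$ exponentially fast, and then to read off both conclusions from the explicit nonnegative energy densities (\ref{eq:positive omega}) and (\ref{eq:positive sigma wedge lambda}). Before anything else I would record the sign of $T$: the hypothesis $\lim_{s\to-\infty}a(s,t)=-\infty$ is incompatible with $a_{s}\to T<0$, so once $a_{s}\to T$ is established we must have $T>0$; equivalently the trivial cylinder $(s,t)\mapsto(Ts,\gamma(Tt))$ descends into the negative end precisely when $T>0$.

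The analytic heart is an asymptotic estimate. Because $a(s,t)\to-\infty$ and $u(s,t)\to\gamma(Tt)$, for $s$ below some threshold $s_{0}$ the image of $\tilde u$ lies in $(-\infty,r_{0}]\times U$ with $U$ a small neighborhood of $\gamma$ and $r_{0}\to-\infty$; there (ACC1) makes $J$ exponentially $C^{l}$-close to the translation-invariant $J_{-\infty}$, so on the tail $\tilde u$ is an arbitrarily small perturbation of a $J_{-\infty}$-holomorphic map. I would first upgrade the given $C^{0}$ convergence to $C^{1}$ (indeed $C^{\infty}$) convergence on the tail: translating the target by $f_{a(\tau,0)}$ turns the unbounded end into a family of maps into a fixed region of a target whose background $(J,g_{W_{-}})$ has uniformly bounded geometry, so a standard no-bubbling/elliptic-bootstrapping argument yields uniform gradient bounds and $C^{\infty}_{loc}$ convergence to the trivial cylinder. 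In particular $a_{s}(s,t)\to T$ uniformly in $t$, whence $a(s,t)=Ts+o(|s|)$ and $\lim_{s\to-\infty}a(s,t)/s=T$, which is the first assertion. With this $C^{1}$ smallness in hand the Morse-Bott hypothesis (Definition \ref{def:Morse Bott}) supplies a spectral gap for the asymptotic operator, and the usual differential-inequality argument---exactly as in the convergence analysis of \cite{asympt} behind Theorem \ref{thm:converge to reeb orbit}, where finite energy is used only to produce the convergence we are already handed---yields constants $C,\delta>0$ with $|\pi_{\xi}\tilde u_{s}|+|\lambda(\tilde u_{s})|+|a_{s}-T|\le Ce^{\delta s}$ for $s\le s_{0}$.

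Granting the decay, the energy bounds are routine, and I would organize them by splitting $\Sigma=([s_{0},0]\times S^{1})\cup((-\infty,s_{0}]\times S^{1})$. On the compact piece every density is bounded and $\phi\le1$, so both contributions are finite and independent of $\phi$. On the tail, by (\ref{eq:positive omega}) the $\omega$-density is $\omega(\pi_{\xi}\tilde u_{s},J\pi_{\xi}\tilde u_{s})\le C'e^{2\delta s}$, which is integrable, giving $E_{\omega}(\tilde u)<\infty$. For $E_{\lambda}$ I use (\ref{eq:positive sigma wedge lambda}) with $\sigma(\tilde u_{s})=a_{s}$: the term $\phi(a)\lambda(\tilde u_{s})^{2}\le C^{2}e^{2\delta s}$ is integrable uniformly in $\phi$, while for the term $\phi(a)a_{s}^{2}$ I would fix $t$, note that $a(\cdot,t)$ is strictly increasing on the tail (since $a_{s}\approx T>0$), and substitute $x=a(s,t)$ to get $\int\phi(a)a_{s}^{2}\,ds\le\tfrac{3T}{2}\int_{-\infty}^{0}\phi(x)\,dx=\tfrac{3T}{2}$ uniformly in $\phi$; integrating over $t\in S^{1}$ keeps this finite. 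Adding the pieces gives $E_{\lambda}(\tilde u)<\infty$, hence $E(\tilde u)=E_{\omega}(\tilde u)+E_{\lambda}(\tilde u)<\infty$.

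The main obstacle is the asymptotic step. The delicate point is to obtain the exponential decay without circularly invoking finite energy: one must argue that $C^{0}$ convergence to a single Morse-Bott family of orbits, together with (ACC1), already rules out energy concentration on the tail and therefore feeds both the elliptic bootstrap and the spectral-gap estimate. Everything afterward---in particular the change-of-variables bound for the $\sigma$-part of $E_{\lambda}$, which is the one place where the normalization $\int\phi=1$ and the sign $T>0$ are genuinely used---is elementary.
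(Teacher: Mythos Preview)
Your proposal is correct and follows essentially the same route as the paper: invoke the Morse--Bott asymptotic analysis from \cite{asympt} (the paper simply cites the proof of Theorem~2 there) to upgrade the assumed $C^{0}$ convergence to exponential decay, and then read off $a(s,t)/s\to T$ and $E(\tilde u)<\infty$ by direct computation with the densities (\ref{eq:positive omega}) and (\ref{eq:positive sigma wedge lambda}). You have spelled out explicitly the ``direct calculation'' step---in particular the change-of-variables bound for the $\phi(a)a_{s}^{2}$ term---that the paper leaves to the reader, and you have correctly flagged the one subtle point, namely that the exponential-decay machinery of \cite{asympt} must be fed by the hypothesized convergence rather than by a finite-energy assumption.
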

\begin{proof}
This follows immediately from the proof of Theorem 2 in \cite{asympt}.
Namely, from the assumption, we could derive that the convergence
in (\ref{eq:u to gamma}) and (\ref{eq:a to Ts}) is exponentially
fast. Then it follows by definition and direct calculation that $E(\tilde{u})<\infty$.\end{proof}
\begin{rem}
Theorem \ref{thm:converge to reeb orbit} and Lemma \ref{lem: hofer energy bound for single curve}
also hold for $W_{+}$.
\end{rem}

\section{\label{sec:Almost-complex-manifolds with ends}Almost complex manifolds
with asymptotically cylindrical ends}

Now we introduce the notion of almost complex manifolds with asymptotically
cylindrical ends. 

Let $(E,J)$ be a $2N$ dimensional noncompact almost complex manifold,
and $W_{\pm}$ be an open subset containing the positive (negative)
end of $E$. Assume that $W_{\pm}$ is diffeomorphic to $\mathbb{R}^{\pm}\times V_{\pm}$,
where $V_{\pm}$ is a $2N-1$ dimensional closed manifold. Assume
that there exists a $J$-compatible symplectic form $\omega'$ on
$E,$ and that $J|_{W_{\pm}}$ is an asymptotically cylindrical almost
complex structure at positive (negative) infinity, then we say $(E,J)$
is an almost complex manifold with asymptotically cylindrical positive
(negative) ends.

Let $\tilde{u}$ be a $J$-holomorphic map from a possibly punctured
Riemann surface $(\Sigma,j)$ to $(E,J)$, and then we define for
$a\geqq0,$

\[
E_{symp,a}(\tilde{u})=\int_{\tilde{u}^{-1}\left(E\backslash W_{+}^{a}\bigcup W_{-}^{a}\right)}\tilde{u}^{*}\omega',
\]
 where $W_{+}^{a}:=(a,+\infty)\times V_{+}\subset W_{+},$ and $W_{-}^{a}:=(-\infty,-a)\times V_{-}\subset W_{-}.$

\[
E_{\omega}(\tilde{u})=\int_{\tilde{u}^{-1}(W_{+})}\tilde{u}^{*}\omega+\int_{\tilde{u}^{-1}(W_{-})}\tilde{u}^{*}\omega,
\]

\[
E_{\lambda}(\tilde{u})=\underset{\phi\in\mathcal{C}_{+}}{\sup}\int_{\tilde{u}^{-1}(W_{+})}\tilde{u}^{*}(\phi(r)\sigma\wedge\lambda)+\underset{\phi\in\mathcal{C}_{-}}{\sup}\int_{w^{-1}(W_{-})}\tilde{u}^{*}(\phi(r)\sigma\wedge\lambda),
\]
 where 
\[
\mathcal{C}_{+}=\left\{ \phi\in C^{\infty}(\mathbb{R}^{+},[0,1])|\int\phi=1\right\} ,
\]
 
\[
\mathcal{C}_{-}=\left\{ \phi\in C^{\infty}(\mathbb{R}^{-},[0,1])|\int\phi=1\right\} ,
\]
 and 
\[
E_{a}(\tilde{u})=E_{symp,a}(\tilde{u})+E_{\omega}(\tilde{u})+E_{\lambda}(\tilde{u}).
\]

If $\underset{a\to+\infty}{\lim}E_{symp,a}(\tilde{u})$ is finite,
we define 
\[
E_{symp}(\tilde{u})=\underset{a\to+\infty}{\lim}E_{symp,a}(\tilde{u})
\]
 and 
\[
E(\tilde{u})=E_{symp}(\tilde{u})+E_{\omega}(\tilde{u})+E_{\lambda}(\tilde{u}).
\]

To compactify the moduli space of $J$-holomorphic curves, we need
to include holomorphic buildings (see \cite{compactness}). There
is no difference between almost complex manifolds with cylindrical
ends and almost complex manifolds with asymptotically cylindrical
ends when it comes to the definition of holomorphic buildings and
the topology of the moduli space of holomorphic buildings. We also
have the expected compactness theorem for the latter case. 
\begin{thm}
\label{thm:compactification} (\cite{compactness} for cylindrical
case; \cite{asympt}) For any $a\geqq0,$ the moduli space of stable
holomorphic buildings with uniformly bounded Hofer energy $E_{a}$,
whose domains have a fixed number of arithmetic genus and a fixed
number of marked points, is compact. 
\end{thm}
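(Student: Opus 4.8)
The plan is to reduce the statement to the compactness theorem proved for cylindrical $J$ in \cite{compactness}, isolating exactly those steps where the translation invariance of $J$ was used and replacing them by the asymptotically cylindrical analysis of \cite{asympt}. So I would start with a sequence $\tilde{u}_n$ of stable holomorphic buildings with $E_a(\tilde{u}_n)\leqq C$, fixed arithmetic genus, and fixed number of marked points, and aim to extract a subsequence converging, in the Symplectic Field Theory sense, to a stable holomorphic building. First I would apply Deligne--Mumford compactness to the underlying stable domains to obtain, after passing to a subsequence, a limiting nodal Riemann surface together with a collection of degenerating necks whose conformal moduli tend to infinity; these necks, together with the asymptotically cylindrical ends of $E$, are the only places where the target is allowed to stretch.

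On the thick part of each domain the key is an a priori gradient bound. Since the asymptotically cylindrical metric is uniformly equivalent to the translationally invariant metric $g_{W_{\pm}}$ away from a compact set, and since $J$ is uniformly bounded in all $C^l$-norms by (ACC1), the usual mean value and monotonicity inequalities apply and show that, after passing to a further subsequence, $|d\tilde{u}_n|$ can blow up only at finitely many points of the thick part. At each such point I would perform the standard soft rescaling to extract a nonconstant bubble; the uniform bound on $E_a$ together with the quantization of energy caps the number of bubbles, and elliptic bootstrapping yields $C^\infty_{loc}$-convergence on the complement of the finite bubbling set by Arzel\`a--Ascoli.

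The substantive part is the analysis of the long necks and of the portions of $\tilde{u}_n$ escaping into $W_{\pm}$. On a neck of large modulus carrying little $\omega$- and $\lambda$-energy, the restriction of $\tilde{u}_n$ must be $C^\infty$-close to a (multiply covered) trajectory of $\mathbf{R}_{-\infty}$, and the Morse--Bott hypothesis of Definition \ref{def:Morse Bott} guarantees that the asymptotic orbits vary in a finite-dimensional family and converge to a single limiting orbit without drifting; this is precisely the content of Theorem \ref{thm:converge to reeb orbit}, whose asymptotically cylindrical proof in \cite{asympt} supplies the exponential decay estimates needed at each puncture. Lemma \ref{lem: hofer energy bound for single curve} then certifies that each limiting punctured component again has finite Hofer energy, so that the levels one produces are legitimate finite-energy curves. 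Inserting a new level of the building for each maximally stretching neck and matching the asymptotic orbits across adjacent levels, I would assemble the limiting building and verify that it is connected, carries the prescribed arithmetic genus and marked points, and is stable after collapsing ghost components.

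The main obstacle is exactly this neck analysis in the asymptotically cylindrical setting: when $J$ fails to be translation invariant, the cylinder-with-small-energy estimates, the proof that no energy is lost in the necks, and the exponential approach to the limiting Reeb orbit all acquire error terms. Controlling these requires the exponential smallness $\|J-J_{-\infty}\|_l\leqq C_l e^{\delta_l r}$ of (ACC1), which feeds into the a priori estimates of \cite{asympt} so that the error terms are summable and the standard ``no energy loss'' and ``no orbit drift'' arguments of \cite{compactness} go through unchanged. Granting those estimates, the remaining bookkeeping --- stability, additivity of the arithmetic genus across levels and nodes, and continuity of the period of the breaking orbits --- is identical to the cylindrical case.
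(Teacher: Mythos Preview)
Your proposal is a reasonable outline of the SFT compactness argument, and it correctly identifies the key ingredients: Deligne--Mumford compactness on the domain, gradient bounds and bubbling on the thick part, and the neck/end analysis using the Morse--Bott asymptotics of Theorem~\ref{thm:converge to reeb orbit} together with the exponential decay from \cite{asympt}. There is no genuine mathematical gap in what you sketch.

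That said, you should be aware that the paper does not actually supply a proof of Theorem~\ref{thm:compactification}; it is simply stated with attribution to \cite{compactness} for the cylindrical case and to \cite{asympt} for the asymptotically cylindrical extension. So there is no ``paper's own proof'' to compare against here. Your write-up is essentially a summary of how the argument in those references proceeds, and in that sense it matches the intended approach. If you were submitting this as a standalone proof you would need to make the ``no energy loss'' and ``no orbit drift'' steps precise rather than merely asserting that the error terms from (ACC1) are summable, but as a proposal indicating the route, it is on target.
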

The following theorem shows that in the contact case Hofer energy
$E_{a}(\tilde{u})$ can be uniformly bounded by the Symplectic area
$E_{symp,a}(\tilde{u})$ and the periods of the periodic orbits of
$\mathbf{R}_{\pm\infty}$ that $\tilde{u}$ converges to at infinity
(compare to 9.2 in \cite{compactness}). 
\begin{thm}
\label{thm: hofer's energy bd by symplectic area-1} Suppose $(E,J)$
is an almost complex manifold with asymptotically cylindrical ends
of contact type. There exist positive constants $C,C',$ and $a$
such that for any finitely punctured Riemann surface $(\Sigma,j)$
and any non-constant $J$-holomorphic curve $\tilde{u}:\Sigma\to E$
which converges to periodic orbits $\gamma_{\pm}$'s of $\mathbf{R}_{\pm\infty}$
around the punctures of $\Sigma$, we have 
\[
E_{a}(\tilde{u})\leqq C\left(2\sum\int\gamma_{+}^{*}\lambda_{+\infty}-\sum\int\gamma_{-}^{*}\lambda_{-\infty}\right)+C'E_{symp,a}(\tilde{u}),
\]
 where the summations are taken over all the periodic orbits $\gamma_{\pm}$'s
of $\mathbf{R}_{\pm\infty}$ to which $\tilde{u}$ converges respectively.
\end{thm}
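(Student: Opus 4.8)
The plan is to bound the two ``cylindrical'' pieces $E_\omega$ and $E_\lambda$ of $E_a$ separately, since $E_{symp,a}$ already appears on the right; throughout I abbreviate the asymptotic actions by $P_\pm:=\sum\int\gamma_\pm^*\lambda_{\pm\infty}$ and write $M_0:=E\setminus(W_+^a\cup W_-^a)$ for the compact middle, which contains the two collars $\{0\leqq\pm r\leqq a\}$. The one elementary input I would use repeatedly is a \emph{comparison estimate}: on the compact set $M_0$ any fixed smooth $2$-form $\eta$ satisfies $|\eta(v,Jv)|\leqq C_\eta\,\omega'(v,Jv)$ pointwise, so for a $J$-holomorphic $\tilde u$ one gets $\left|\int_{\tilde u^{-1}(M_0)}\tilde u^*\eta\right|\leqq C_\eta E_{symp,a}$. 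On the deep ends $W_\pm^a$ the analogous corrections are controlled instead by (ACC1) and (c): the contact-type hypothesis $\omega_{\pm\infty}=d\lambda_{\pm\infty}$ forces $\tilde u^*\omega$, $\tilde u^*d\lambda$ and $\tilde u^*\omega_{\pm\infty}$ to agree up to a factor $e^{-\delta|r|}$, and since $\tilde u$ has finite Hofer energy and converges exponentially to the $\gamma_\pm$ (Theorem \ref{thm:converge to reeb orbit} and Lemma \ref{lem: hofer energy bound for single curve}), choosing $a$ large makes every such deep-end error at most $\frac{1}{2}(E_\omega+E_\lambda)$, hence absorbable into the left-hand side.

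First I would treat $E_\omega$ via a \emph{global action identity}. Build a smooth $1$-form $\beta$ on $E$ with $\beta=\lambda$ on $W_+\cup W_-$ (arbitrary on $M_0$), so $d\beta=d\lambda$ on the ends. Applying Stokes on $\Sigma$, exhausted by deleting half-cylinder neighborhoods of the punctures and using the exponential convergence $\tilde u^*\beta\to\gamma_\pm^*\lambda_{\pm\infty}$, gives $\int_\Sigma\tilde u^*d\beta=P_+-P_-$, the puncture orientations producing the signs. Splitting into the ends and $M_0$, replacing $d\lambda$ by $\omega$ on the ends at the cost of the deep-end errors above, and bounding $\int_{\tilde u^{-1}(M_0)}\tilde u^*d\beta$ by $c_\beta E_{symp,a}$ through the comparison estimate, yields
\[
E_\omega\;\leqq\;\bigl(P_+-P_-\bigr)+c_1E_{symp,a}.
\]
Because $E_\omega\geqq0$, this also records $P_--P_+\leqq c_1E_{symp,a}$, the inequality that will produce the asymmetric coefficients.

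Next, for $E_\lambda$ I would run a per-end Stokes argument, writing $\phi(r)\sigma\wedge\lambda=d(\Phi\lambda)-\Phi\,d\lambda$ with $\Phi'=\phi$ and choosing the primitive to exploit the orientation. At the positive end take $\Phi_+(r)=\int_0^r\phi\geqq0$, so $\Phi_+(0)=0$ kills the collar boundary, the puncture term is $P_+$, and $-\int\Phi_+\tilde u^*d\lambda\leqq0$ up to the usual errors since $\Phi_+\geqq0$ and $\tilde u^*d\lambda\geqq0$ asymptotically; thus $E_\lambda^+\leqq P_++c_2E_{symp,a}$, with \emph{no} $E_\omega^+$ term. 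At the negative end the orientation forces the opposite-signed primitive $\Psi_-(r)=\int_{-\infty}^r\phi-1\leqq0$ (vanishing at $r=0$, equal to $-1$ at the puncture): the puncture term is $P_-$ and now $-\int\Psi_-\tilde u^*d\lambda=\int|\Psi_-|\tilde u^*d\lambda\geqq0$ is genuinely present, bounded by $\int_{\tilde u^{-1}(W_-)}\tilde u^*d\lambda=E_\omega^-+(\text{errors})$, so $E_\lambda^-\leqq E_\omega^-+P_-+c_3E_{symp,a}\leqq E_\omega+P_-+c_3E_{symp,a}$; feeding in the identity above gives $E_\lambda^-\leqq P_++(c_1+c_3)E_{symp,a}$. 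Adding the three estimates, $E_\omega+E_\lambda\leqq3P_+-P_-+c_4E_{symp,a}$, whence $E_a\leqq3P_+-P_-+(c_4+1)E_{symp,a}$; finally the recorded inequality $P_+-P_-\geqq-c_1E_{symp,a}$ and the identity $4P_+-2P_-=(3P_+-P_-)+(P_+-P_-)$ give $3P_+-P_-\leqq2(2P_+-P_-)+c_1E_{symp,a}$, which is exactly the claim with $C=2$ and $C'=c_4+1+c_1$.

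The hard part will be the global identity of the second step: the other estimates are local to a single end and follow the standard Hofer-energy template of \cite{Hofer Weinstein conjecture,compactness}, but $E_\omega$ cannot be bounded end-by-end — a naive one-end Stokes argument leaves the uncontrolled ``action through the level $r=0$,'' namely $\int_{\tilde u^{-1}(\{r=0\})}\tilde u^*\lambda$, and it is precisely the \emph{global} primitive $\beta$ together with the comparison estimate on compact $M_0$ that converts this quantity into $E_{symp,a}$ while linking the two ends so that $P_+$ and $P_-$ appear in a single identity. The technical burden is in verifying that this global Stokes argument is legitimate on the non-compact punctured domain — integrability of $\tilde u^*d\beta$ and vanishing of the cut-off boundary contributions — and that each asymptotically-cylindrical error is genuinely $O(e^{-\delta a})$ times a finite quantity; this is where finite energy, Morse--Bott, exponential convergence, and contact type are all consumed.
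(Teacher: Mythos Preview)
Your proposal is correct and shares the paper's core mechanism: Stokes' theorem plus the observation that every ``asymptotically-cylindrical'' discrepancy on the deep ends is bounded by $Ce^{\kappa r}$ times $\omega+\sigma\wedge\lambda$ on $J$-complex planes, so that after choosing the cutoff level deep enough these errors are $\leqq\tfrac14(E_\omega+E_\lambda)$ and can be absorbed into the left side. The paper makes these pointwise comparisons explicit as inequalities of the form $\omega\leqq C_2\bigl(d\lambda_{-\infty}+\tfrac{C_1e^{\kappa_1 r}}{1+C_1e^{\kappa_1 r}}dr\wedge\lambda_{-\infty}\bigr)$ and the analogous bounds for $\sigma\wedge\lambda$, $dr\wedge\lambda_{-\infty}$, $d\lambda_{-\infty}$; what you call ``the usual errors'' is precisely this package, and the absorption step is the same.

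The one structural difference is how the interior is handled. You introduce a global primitive $\beta$ and obtain $\int_\Sigma\tilde u^*d\beta=P_+-P_-$ in one stroke, then bound the $M_0$ piece by the comparison estimate. The paper instead works strictly end-by-end: it applies Stokes on $\hat A=\tilde u^{-1}\bigl((-\infty,-\mathfrak r]\times V_-\bigr)$ and is left with the level-set action $\int_{B_1}\tilde u^*\lambda_{-\infty}$, which it then controls by writing $\int_{B_1}\tilde u^*\lambda_{-\infty}=\int\tilde u^*d(\tau\lambda_{-\infty})$ for a cutoff $\tau$ supported on the compact collar and comparing $d(\tau\lambda_{-\infty})$ with $\omega'$ there. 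So your assertion that ``$E_\omega$ cannot be bounded end-by-end'' is not quite right: the level-set action is not uncontrolled, it is bounded by $E_{symp,a}$ via exactly the comparison estimate you already invoke, just applied on the collar rather than on all of $M_0$. Your global $\beta$ gives a cleaner identity linking $P_+$ and $P_-$ directly and avoids choosing the regular value $-\mathfrak r$; the paper's local route avoids constructing $\beta$ and keeps the two ends completely decoupled until the final line. A minor bookkeeping difference is the choice of primitive for $E_\lambda$ on the negative end: the paper takes $\Phi(r)=\int_{-\infty}^r\phi$ (so the puncture term vanishes and only $\int_{B_1}$ survives), whereas you take $\Psi_-=\Phi-1$ (so the inner boundary vanishes and $P_-$ appears, at the cost of an extra $E_\omega^-$); both lead to the stated bound after the final algebraic massage.
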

The proof of this theorem is given in the appendix. Roughly speaking,
it follows from Stokes' theorem.

\section{\label{sec:An-application-to}An application to closed symplectic
manifolds with a compatible $J$}

Now we would like to apply the previous results to study the moduli
space of $J$-holomorphic curves passing through a fixed point in
a closed symplectic manifold. This generalizes some results in \cite{holomorphic curves at one point}.

Let $M$ be a closed smooth symplectic manifold of dimension $2N$
with symplectic form $\omega',$ and $J$ be a compatible almost complex
structure. For a sufficiently small neighborhood $U$ of $p\in M,$
there exists a Darboux coordinate chart $\varphi:U\to B(O,\epsilon)\subseteq\mathbb{C}^{N}$
such that $\varphi(p)=O,$ $\left.\varphi_{*}J\right|_{O}=i|_{O}$
and $\varphi^{*}\omega_{st}=\omega',$ where $O$ is the origin, $B(O,\epsilon):=\left\{ \left.z\in\mathbb{C}^{N}\right||z|<\epsilon\right\} $
and $i$ is the standard complex structure on $\mathbb{C}^{N},$ and
$\omega_{st}:=\frac{i}{2}\sum_{k=1}^{n}dz_{k}\wedge d\bar{z}_{k}=\sum_{k=1}^{n}dx_{k}\wedge dy_{k}$
is the standard symplectic structure on $\mathbb{C}^{N}.$ We identify
$B(O,\epsilon)\backslash O$ with $W_{-}:=\mathbb{R}^{-}\times S^{2N-1}$
via the map $\psi(z)=(\log|z|-\log\epsilon,\frac{z}{|z|}).$ Let us
simplify the notation $\left(\psi\circ\varphi\right)_{*}J$ by $J$
when there is no confusion. This gives $(M\backslash p,J)$ the structure
of an almost complex manifold with one asymptotically cylindrical
negative end.

Indeed, we define $\xi$, $\mathbf{R},$ and $\lambda$ as before.
Then $\lambda_{-\infty}:=\underset{s\to-\infty}{\lim}f_{s}^{*}\lambda=\Pi^{*}\lambda_{st},$
where 
\[
\lambda_{st}=\left.\frac{1}{2}\sum_{k=1}^{N}\left(x_{k}dy_{k}-y_{k}dx_{k}\right)\right|_{S^{2N-1}}
\]
 is the standard contact $1$-form on the unit sphere $S^{2N-1}\subseteq\mathbb{C}^{N},$
and $\Pi:\mathbb{R}^{-}\times S^{2N-1}\to S^{2N-1}$ is the projection.
We choose $\omega_{-\infty}=d\lambda_{-\infty}.$ 

Notice that $\mathbf{R}_{-\infty}:=\underset{s\to-\infty}{\lim}f_{s}^{*}\mathbf{R}$
restricted to $S^{2N-1}$ is exactly the standard Reeb vector field
on $S^{2N-1}$, so we can see that $J$ is Morse-Bott. 

Let $(\Sigma,j)$ be a Riemann surface with finitely many punctures
and $\tilde{u}:\mbox{\ensuremath{\Sigma}}\to M\backslash p$ be a
$J$-holomorphic curve, i.e. $J(\tilde{u})\circ T\tilde{u}=T\tilde{u}\circ j$.

We say a puncture $q$ of $\Sigma$ is removable if around $q,$ $\tilde{u}$
converges to a point in $M\backslash p.$ Otherwise, we say $q$ is
non-removable. To clarify the relations between different concepts
we state the following lemma.
\begin{lem}
\label{lem:Suppose-that-all}Suppose that all the punctures of $\Sigma$
are non-removable. Then the following statements are equivalent. \end{lem}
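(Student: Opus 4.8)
The plan is to establish the equivalences through a cycle of implications, after first reducing every condition to the local behavior of $\tilde{u}$ in a punctured neighborhood of each puncture. Since $\Sigma$ has only finitely many punctures, I would fix a puncture $q$ and choose a biholomorphism from a punctured neighborhood of $q$ onto the half-cylinder $\mathbb{R}^{-}\times S^{1}$ with coordinates $(s,t)$, so that the restriction of $\tilde{u}$ becomes a $J$-holomorphic map $(a,u):\mathbb{R}^{-}\times\mathbb{R}/\mathbb{Z}\to W_{-}$ of exactly the type treated in Section~\ref{sec:Asymptotically...}. Because $q$ is non-removable, the image of $\tilde{u}$ near $q$ cannot converge to a point of $M\setminus p$; the only behavior compatible with $J$-holomorphicity that remains is escape into the cylindrical end, i.e. $a(s,t)\to-\infty$ as $s\to-\infty$. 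Establishing this trichotomy (removable, or $a\to-\infty$, or infinite local energy) cleanly is the first technical point, since it is what makes ``non-removable'' usable as a hypothesis.

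For the implication from finite Hofer energy to convergence to a Reeb orbit, I would apply Theorem~\ref{thm:converge to reeb orbit} on each half-cylinder: finiteness of $E(\tilde{u})$ gives finiteness of the local energy, and since non-removability makes the image unbounded, the theorem produces a periodic orbit $\gamma$ of $\mathbf{R}_{-\infty}$ together with the asymptotic formulas \eqref{eq:u to gamma} and \eqref{eq:a to Ts}. For the reverse direction, convergence to a Reeb orbit with $a\to-\infty$ is precisely the hypothesis of Lemma~\ref{lem: hofer energy bound for single curve}, which returns $E(\tilde{u})<\infty$ on that end; summing the finitely many end contributions and adding the integral over the compact part of $\Sigma$ yields global finiteness of the Hofer energy.

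To tie these to finiteness of the symplectic area $\int_{\Sigma}\tilde{u}^{*}\omega'$, I would use the explicit Liouville structure near $p$. Under the identification $\psi$ we have $\omega'=\omega_{st}=d\lambda_{\mathrm{Liouville}}$, and on the end $\lambda_{\mathrm{Liouville}}$ restricts to a positive multiple of $e^{2r}\lambda_{-\infty}$, so that $\tilde{u}^{*}\omega'$ is an exponentially weighted combination of the pointwise-nonnegative integrands appearing in \eqref{eq:positive omega} and \eqref{eq:positive sigma wedge lambda}. Once convergence to $\gamma$ is known, Stokes' theorem on $[s,0]\times S^{1}$ identifies the boundary term over $\{s\}\times S^{1}$ with a quantity tending to $\int\gamma^{*}\lambda_{-\infty}$, which together with the weight $e^{2r}\to0$ shows directly that the local symplectic area is finite. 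This handles ``convergence to a Reeb orbit $\Rightarrow$ finite symplectic area.''

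The main obstacle is the converse passage, from finite symplectic area back to the clean asymptotics. The weight $e^{2r}$ suppresses the integrand as $r\to-\infty$, so a priori finiteness of $E_{symp}$ is weaker than finiteness of $E_{\omega}$, and finite area alone need not supply the asymptotic behavior that Theorem~\ref{thm:converge to reeb orbit} requires as input. I expect to close this gap by a bubbling-off/gradient-bound argument on the end: uniform finiteness of the area plus the non-removability of $q$ should first force a uniform bound on $|\nabla\tilde{u}|$ over the end, which upgrades finite symplectic area to a finite local Hofer energy, after which Theorem~\ref{thm:converge to reeb orbit} applies and the cycle closes. Verifying that no energy escapes at the puncture (so that the weighted-area finiteness genuinely controls the unweighted $\omega$- and $\lambda$-energies) is where the positivity of \eqref{eq:positive omega}--\eqref{eq:positive sigma wedge lambda} and the precise form of the weight must be combined with care.
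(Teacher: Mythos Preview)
Your treatment of the equivalences $(1)\Leftrightarrow(2)\Leftrightarrow(3)$ and of $(1)\Rightarrow(4)$ matches the paper's: Theorem~\ref{thm:converge to reeb orbit} and Lemma~\ref{lem: hofer energy bound for single curve} are exactly the tools used there, and the Stokes computation you sketch for $(1)\Rightarrow(4)$ is what the paper calls ``direct calculation.''

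The gap is in your handling of the return direction from finite symplectic area, and relatedly you never address condition~(5) at all, even though it is one of the five statements to be proved equivalent. Your plan is to stay inside $W_{-}$ and argue that finite $E_{symp}$ plus non-removability forces a gradient bound on the end, which then ``upgrades finite symplectic area to a finite local Hofer energy.'' This does not work as stated: in the cylindrical coordinates the $r$-component $a(s,t)$ necessarily diverges, so $|\nabla\tilde{u}|$ measured with the translation-invariant metric on $W_{-}$ is \emph{not} bounded, and there is no mechanism by which the $e^{2r}$-weighted area controls the unweighted $E_{\omega}$ or $E_{\lambda}$ directly. The positivity in \eqref{eq:positive omega}--\eqref{eq:positive sigma wedge lambda} goes the wrong way for this.

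The paper avoids this entirely by routing $(4)\Rightarrow(5)\Rightarrow(1)$. The key move is to change the target: view $\tilde{u}$ as a map into the \emph{compact} manifold $M$ rather than into $M\setminus p$. Then finiteness of $\int_{\Sigma}\tilde{u}^{*}\omega'$ is exactly the hypothesis of the standard Removable Singularity Theorem, giving a smooth extension over $S$; this is $(4)\Rightarrow(5)$. Since the puncture is non-removable in $M\setminus p$, the extension sends it to $p$, and the step $(5)\Rightarrow(1)$ is then a citation of Theorem~B in Robbin--Salamon \cite{Robbin Salamon}, which supplies the precise Reeb-orbit asymptotics from the smooth extension. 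Your bubbling-off instinct is pointing at the Removable Singularity Theorem, but you should apply it in $M$ and then invoke the known asymptotic result, rather than attempt to manufacture a Hofer-energy bound inside $W_{-}$.
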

\begin{enumerate}
\item $\tilde{u}$ converges to some Reeb orbits of $\mathbf{R}_{-\infty}$
at negative infinity around the punctures of $\Sigma$.
\item $E_{a}(\tilde{u})$ is finite for all $a\geqq0$. 
\item $E_{a}(\tilde{u})$ is finite for some $a\geqq0$.
\item $\underset{a\to+\infty}{\lim}E_{symp,a}(\tilde{u})$ is finite.
\item If we view $\tilde{u}$ as a map from $\Sigma$ to $M,$ then $\tilde{u}$
extends smoothly over $S$, where $S$ is the smooth Riemann surface
associated to $\Sigma.$\end{enumerate}
\begin{proof}
It is obvious that $(2)\Longleftrightarrow(3)$. Lemma \ref{lem: hofer energy bound for single curve}
says $(1)\Longrightarrow(3).$ From Theorem \ref{thm:converge to reeb orbit}
and Removable Singularity Theorem, we get $(3)\Longrightarrow(1)$.
$(1)\Longrightarrow(4)$ follows from direct calculation. $(4)\Longrightarrow(5)$
is true by the Removable Singularity Theorem. Finally, $(5)\Longrightarrow(1)$
is guaranteed by Theorem B%
\footnote{Theorem B is stated for the case of a $J$-holomorphic strip with
Lagrangian boundary condition, but it is easy to see that it is also
true in this closed case.%
} in \cite{Robbin Salamon}.
\end{proof}
Assuming any of the (1)-(5) is true, then by (4) and (5) we have 
\[
E_{symp}(\tilde{u})=\underset{a\to+\infty}{\lim}E_{symp,a}(\tilde{u})=\underset{a\to+\infty}{\lim}\int_{\tilde{u}^{-1}\left(E\backslash W_{-}^{a}\right)}\tilde{u}^{*}\omega'=\int_{S}\tilde{u}^{*}\omega'<+\infty.
\]
 Thus, $E(\tilde{u})=E_{symp}(\tilde{u})+E_{\omega}(\tilde{u})+E_{\lambda}(\tilde{u})$
is well defined.

The multiplicity of a Reeb orbit $\gamma$ is the degree of $\gamma$
as a cover of a simple Reeb orbit. For each non-removable punctures
$q$ of $\Sigma,$ we can associate a positive integer which is the
multiplicity of the corresponding Reeb orbit that $\tilde{u}$ converges
to around $q$.

Let $\tilde{u}$ be a non-constant $J$-holomorphic curve from a smooth
Riemann surface $(S,j)$ to $M.$ By the Carleman Similarity principle,
we know $\tilde{u}^{-1}(p)$ is discrete, and hence finite. Let $(\Sigma,j)$
be the punctured Riemann surface $(S\backslash\tilde{u}^{-1}(p),j).$
Now $\tilde{u}$ can be viewed as a $J$-holomorphic curve from $\Sigma$
to $M\backslash p.$ This means that the condition (4) in Lemma \ref{lem:Suppose-that-all}
is satisfied, so we have (1)-(5). An easy modification of the proof
of Theorem \ref{thm: hofer's energy bd by symplectic area-1} leads
to the next theorem. 
\begin{thm}
(Gromov's Monotonicity Theorem with multiplicity\label{thm:(Gromov's-Monotonicity-Theorem})
For a closed symplectic manifold $(M,\omega')$ with a compatible
almost complex structure $J,$ there exists a constant $r_{0}>0$
and a function $\hbar(r)>0$ such that for any point $p\in M,$ and
any $J$-holomorphic curve $\tilde{u}$ from a Riemann surface (with
boundary) $\mathcal{S}$ mapped to $M$ that passes through the point
$p$ for $k$ times (counted with multiplicity), and satisfies $\tilde{u}(\partial\mathcal{S})\cap B_{r}(p)=\emptyset,$
for $0<r<r_{0},$ the following is true. 
\[
\int_{\tilde{u}^{-1}(B_{r}(p))}\tilde{u}^{*}\omega'>k\hbar(r),
\]
 where $B_{r}(p)$ is a ball of radius $r$ centered at $p$ inside
$M.$
\end{thm}
The proof of Theorem \ref{thm:(Gromov's-Monotonicity-Theorem} is
given in the appendix. Now it follows immediately that 
\begin{cor}
\label{cro: closed case hofer's energy bd by symplectic area} There
exists a constant $C>0$ depending only on $(M,\omega,J)$ such that
for any Riemann surface $(S,j)$ and any non-constant $J$-holomorphic
curve $\tilde{u}:S\to M$ passing through a point $p$ for $k$ times,
we have $k\leqq CE_{symp}(\tilde{u}).$
\end{cor}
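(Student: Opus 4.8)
The plan is to derive Corollary \ref{cro: closed case hofer's energy bd by symplectic area} directly from Theorem \ref{thm:(Gromov's-Monotonicity-Theorem}) by a localization argument. Given a non-constant $J$-holomorphic curve $\tilde{u}:S\to M$ passing through $p$ exactly $k$ times (counted with multiplicity), I would first fix a radius $r$ with $0<r<r_{0}$, where $r_{0}$ is the constant supplied by Theorem \ref{thm:(Gromov's-Monotonicity-Theorem}). The key point is that the monotonicity statement is formulated for curves with boundary mapping into $\partial B_{r}(p)$, whereas $\tilde{u}$ is a closed (or punctured) curve in $M$. I would pass from the global curve to a local piece by restricting $\tilde{u}$ to the preimage $\tilde{u}^{-1}(B_{r}(p))$ and taking the connected components that contain a point mapping to $p$.

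Next, I would apply Theorem \ref{thm:(Gromov's-Monotonicity-Theorem}) to this restricted piece. The restriction $\tilde{u}|_{\tilde{u}^{-1}(B_{r}(p))}$ is a $J$-holomorphic curve whose boundary lies in $\partial B_{r}(p)$ (up to discarding components that do not reach the boundary, which by the open mapping/maximum principle for $J$-holomorphic curves cannot occur for non-constant pieces entirely contained in the open ball with boundary on the sphere), and it passes through $p$ exactly $k$ times, since the multiplicity count at $p$ is a local quantity determined near the finitely many preimages $\tilde{u}^{-1}(p)$. Theorem \ref{thm:(Gromov's-Monotonicity-Theorem}) then yields
\[
\int_{\tilde{u}^{-1}(B_{r}(p))}\tilde{u}^{*}\omega'>k\,\hbar(r).
\]
Since $\tilde{u}^{-1}(B_{r}(p))\subseteq S$ and $\tilde{u}^{*}\omega'\geqq0$ (as $J$ is $\omega'$-compatible), the left-hand side is bounded above by $\int_{S}\tilde{u}^{*}\omega'=E_{symp}(\tilde{u})$, using the identification of $E_{symp}$ with the total symplectic area established just before the corollary. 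Setting $C:=1/\hbar(r)$ for a single fixed admissible $r$ gives $k<C\,E_{symp}(\tilde{u})$, with $C$ depending only on $(M,\omega',J)$ through $\hbar$ and $r$.

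The main obstacle I anticipate is verifying cleanly that the local count of passages through $p$ on the restricted curve agrees with the global count $k$, and that the restricted curve genuinely satisfies the boundary hypothesis of Theorem \ref{thm:(Gromov's-Monotonicity-Theorem}). Concretely, one must argue that no connected component of $\tilde{u}^{-1}(B_{r}(p))$ passing through $p$ is closed (which would violate the non-constancy/maximum-principle argument since $|\tilde{u}|^{2}$ measured in the Darboux chart would be subharmonic and attain an interior maximum), so that every such component does meet $\partial B_{r}(p)$ and thus constitutes a valid curve-with-boundary. Once this is settled, the rest is the monotonicity inequality followed by the trivial bound $\int_{\tilde{u}^{-1}(B_{r}(p))}\tilde{u}^{*}\omega'\leqq\int_{S}\tilde{u}^{*}\omega'$, so the word ``immediately'' preceding the corollary is justified; I would keep the argument short and emphasize only these two checks.
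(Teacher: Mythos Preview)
Your proposal is correct and matches the paper's intended argument: the paper simply says ``Now it follows immediately'' from Theorem~\ref{thm:(Gromov's-Monotonicity-Theorem}, and your restriction-plus-area-comparison is exactly that immediate deduction. The technical checks you flag (regularity of the boundary and preservation of the multiplicity count under restriction) are the right ones, and with those in hand the argument is a one-line application of the monotonicity inequality together with $\int_{\tilde{u}^{-1}(B_{r}(p))}\tilde{u}^{*}\omega'\leqq E_{symp}(\tilde{u})$.
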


\begin{rem}
\label{rmk: Fish}After the submission of the arXiv version 1 of this
paper, we were informed that Corollary \ref{cro: closed case hofer's energy bd by symplectic area}
could also be derived from Corollary 3.6 and the remarks below Corollary
3.6 in \cite{fish}. It is very interesting to see that the methods
used in \cite{fish} and this paper are quite different. In \cite{fish}
the technics from minimal surfaces is used, and a stronger result
than Theorem \ref{thm:(Gromov's-Monotonicity-Theorem} is achieved.
In particular, \cite{fish} implies that $\hbar(r)$ is proportional
to $r^{2}.$ While, in this paper, we view $M\backslash p$ as a manifold
with asymptotically end, and Theorem \ref{thm:(Gromov's-Monotonicity-Theorem}
follows roughly from Stokes' Theorem immediately. (Also see \cite{holomorphic curves at one point}
for a slightly different proof.) However, using this method it is
not clear why $\hbar(r)$ is proportional to $r^{2}.$
\end{rem}

Let $\mathcal{M}_{g}(M,J,Q)$ be the moduli space of stable $J$-holomorphic
curves $\tilde{u}$ in $M$ with genus $g$ and $E_{symp}(\tilde{u})\leqq Q.$
From Corollary \ref{cro: closed case hofer's energy bd by symplectic area}
and Theorem \ref{thm:compactification}, we can compactify $\mathcal{M}_{g}(M,J,Q)$
by including holomorphic buildings (See \cite{holomorphic curves at one point}
for more discussions). 

It will be very interesting and useful to generalize the results in
this paper by replacing the fixed point $p$ with an almost complex
submanifold.

\section{Appendix: Proof of Theorem \ref{thm: hofer's energy bd by symplectic area-1}
and Theorem \ref{thm:(Gromov's-Monotonicity-Theorem}}

For convenience let us introduce the following terminology.
\begin{defn}
We say that a $2-$form $\Delta$ defined on $(-\infty,-R]\times V_{-}$
is $J-$positive (or non-negative), if for a sufficiently large $R,$
$\Delta$ is positive (or non-negative) on any $J-$complex planes
of $TW_{-R}:=T\left((-\infty,-R]\times V_{-}\right).$ In other words,
$\Delta(x,Jx)>0$ (or $\geqq0$), for all $x\in TW_{-R}.$
\end{defn}

\begin{defn}
We say that a $2-$form $\Delta$ defined on $(-\infty,-R]\times V_{-}$
is $J-$positively bounded away from $0,$ if $\inf\Delta(x,Jx)>0$,
where the infimum is taken over all the $x\in TW_{-R}.$ with norm
$\left\Vert x\right\Vert _{g_{W_{-}}}=1$ (Recall that $g_{W_{-}}$
is a translational invariant metric). \end{defn}
\begin{proof}
(Theorem \ref{thm: hofer's energy bd by symplectic area-1}) Let us
deal with the negative end $W_{-}$ first.

For any $R>0,$ we pick $-\mathfrak{r}\in\left[-2R,-R\right]$ such
that $-\mathfrak{r}$ is a regular value of $r\circ\tilde{u},$ where
$r:W_{-}\to(-\infty,0)$ is the projection map. Denote $A:=\tilde{u}^{-1}((-\infty,-\mathfrak{r}]\times V_{-})\subseteq\Sigma$
and $B_{1}:=\tilde{u}^{-1}(\{-\mathfrak{r}\}\times V_{-})$. Let $\hat{A}$
be the oriented blow up of $A$ around all the punctures of $A$,
i.e. $\hat{A}=A\sqcup B_{2}$ with $B_{2}:=\sqcup S^{1}$ being the
disjoint union of circles introduced by the oriented blow up. Hence
we have $\partial\hat{A}=B_{1}\sqcup B_{2}.$ We choose the orientation
of $B_{1}$ to be the boundary orientation from $\hat{A},$ while
we choose the orientation of $B_{2}$ to be the reverse orientation
of the boundary orientation from $\hat{A}.$ 

For $x\in TW_{-}=\mathbb{R}(\frac{\partial}{\partial r})\oplus\mathbb{R}(\mathbf{R}_{-\infty})\oplus\xi_{-\infty},$
we can write $x$ as $x=dr(x)\frac{\partial}{\partial r}+\lambda(x)\mathbf{R}_{-\infty}+\pi_{\xi_{-\infty}}x.$
Then for any constants $P,Q>0,$ we have 
\begin{eqnarray*}
 &  & \left[Pd\lambda_{-\infty}+Qdr\wedge\lambda_{-\infty}\right](x,J_{-\infty}x)\\
 & = & Pd\lambda_{-\infty}(\pi_{\xi_{-\infty}}x,J_{-\infty}\pi_{\xi_{-\infty}}x)+Q\left[dr(x)\right]^{2}+Q\left[\lambda(x)\right]^{2}.
\end{eqnarray*}
 Because $d\lambda_{-\infty}(\cdot,J_{-\infty}\cdot)$ defines a metric
on $\xi_{-\infty}$, we get $Pd\lambda_{-\infty}+Qdr\wedge\lambda_{-\infty}$
is $J_{-\infty}-$positively bounded away from $0$. Denote 
\[
\mathcal{S}:=\left\{ \left.(x,y)\in TW_{-}\times TW_{-}\right|\left\Vert x\right\Vert _{g_{W_{-}}}=1,y=J_{-\infty}x\right\} 
\]
 and 
\[
\mathcal{T}_{-R}:=\left\{ \left.(x,y)\in TW_{-R}\times TW_{-R}\right|\left\Vert x\right\Vert _{g_{W_{-}}}=1,y=Jx\right\} .
\]
 Let $\Delta$ be the smooth map $TW_{-}\times TW_{-}\to\mathbb{R}$
defined by applying $Pd\lambda_{-\infty}+Qdr\wedge\lambda_{-\infty}.$
The fact that $Pd\lambda_{-\infty}+Qdr\wedge\lambda_{-\infty}$ is
$J_{-\infty}-$positively bounded away from $0$ means that $\Delta|_{\mathcal{S}}>\varpi>0$
for some enough small $\varpi.$ By (ACC1) there exists $R$ large
enough, such that $\Delta|_{\mathcal{T}_{-R}}>\frac{1}{2}\varpi>0.$
Therefore, we get that $Pd\lambda_{-\infty}+Qdr\wedge\lambda_{-\infty}$
is $J-$positively bounded away from $0$. 

Since $J-J_{-\infty}$ is exponentially small by (ACC1), there exist
constants $C_{1},\kappa_{1}>0$ such that 
\begin{equation}
\left|d\lambda_{-\infty}(x,(J-J_{-\infty})x)\right|\leqq\frac{1}{2}C_{1}e^{\kappa_{1}r},\label{eq: d lambda J-J infty}
\end{equation}
 for all $x\in TW_{-R}$ with $||x||_{g_{W_{-}}}=1.$ 

From now on let us pick $g_{W_{-}}$ to be $\left\langle x,y\right\rangle _{g_{W_{-}}}=\left(dr\wedge\lambda_{-\infty}+d\lambda_{-\infty}\right)(x,J_{-\infty}y)$
for convenience. Notice that by (ACC1) again, for all $x\in TW_{-R}$
with $||x||_{g_{W_{-}}}=1,$ we get 
\begin{eqnarray}
 &  & \left(dr\wedge\lambda_{-\infty}+d\lambda_{-\infty}\right)(x,Jx)\nonumber \\
 & = & \left(dr\wedge\lambda_{-\infty}+d\lambda_{-\infty}\right)(x,J_{-\infty}x)+\left(dr\wedge\lambda_{-\infty}+d\lambda_{-\infty}\right)(x,(J-J_{-\infty})x)\nonumber \\
 & \geqq & \left\Vert x\right\Vert _{g_{W_{-}}}-K_{0}e^{\delta_{0}r}\nonumber \\
 & > & \frac{1}{2}.\label{eq:>0.5}
\end{eqnarray}
 Hence (\ref{eq: d lambda J-J infty}) and (\ref{eq:>0.5}) imply

\begin{eqnarray*}
 &  & \left[d\lambda_{-\infty}+C_{1}e^{\kappa_{1}r}\left(dr\wedge\lambda_{-\infty}+d\lambda_{-\infty}\right)\right](x,Jx)\\
 & > & d\lambda_{-\infty}(x,J_{-\infty}x)+d\lambda_{-\infty}(x,(J-J_{-\infty})x)+\frac{1}{2}C_{1}e^{\kappa_{1}r}\\
 & \geqq & d\lambda_{-\infty}(x,J_{-\infty}x)\\
 & \geqq & 0,
\end{eqnarray*}
 where the last inequality comes from the fact that $d\lambda_{-\infty}$
is $J_{-\infty}-$non-negative. Hence

\[
d\lambda_{-\infty}+C_{1}e^{\kappa_{1}r}\left(dr\wedge\lambda_{-\infty}+d\lambda_{-\infty}\right)
\]
 is $J-$positive, so is 
\[
d\lambda_{-\infty}+\frac{C_{1}e^{\kappa_{1}r}}{1+C_{1}e^{\kappa_{1}r}}dr\wedge\lambda_{-\infty}.
\]

Similarly, by varying $C_{1}$ and $\kappa_{1}$ if necessary, we
can get that 
\[
\left|dr\wedge\lambda_{-\infty}(x,(J-J_{-\infty})x)\right|\leqq\frac{1}{2}C_{1}e^{\kappa_{1}r},
\]
 for all $x\in TW_{-R}$ with $||x||_{g_{W_{-}}}=1.$ As before, we
have 
\begin{eqnarray*}
 &  & \left[dr\wedge\lambda_{-\infty}+C_{1}e^{\kappa_{1}r}\left(dr\wedge\lambda_{-\infty}+d\lambda_{-\infty}\right)\right](x,Jx)\\
 & > & dr\wedge\lambda_{-\infty}(x,J_{-\infty}x)+dr\wedge\lambda_{-\infty}(x,(J-J_{-\infty})x)+\frac{1}{2}C_{1}e^{\kappa_{1}r}\\
 & \geqq & dr\wedge\lambda_{-\infty}(x,J_{-\infty}x)\\
 & \geqq & 0.
\end{eqnarray*}
 This implies 
\[
dr\wedge\lambda_{-\infty}+C_{1}e^{\kappa_{1}r}\left(dr\wedge\lambda_{-\infty}+d\lambda_{-\infty}\right)
\]
 is $J$-positive, so is

\[
dr\wedge\lambda_{-\infty}+\frac{C_{1}e^{\kappa_{1}r}}{1+C_{1}e^{\kappa_{1}r}}d\lambda_{-\infty}.
\]

From equation (\ref{eq:omega-omega -infty is small}) and the fact
$\omega_{-\infty}=d\lambda_{-\infty},$ we get $\omega-d\lambda_{-\infty}$
is exponentially small. Because $J-J_{-\infty}$ is also exponentially
small, by varying $C_{1}$ and $\kappa_{1}$ if necessary, we can
get that 
\begin{equation}
\left|\left(\omega-d\lambda_{-\infty}\right)(x,Jx)\right|\leqq\frac{1}{2}C_{1}e^{\kappa_{1}r},\label{eq:omega-d lambda infty}
\end{equation}
for all $x\in TW_{-R}$ with $||x||_{g_{W_{-}}}=1.$ 

Therefore, by (\ref{eq:omega-omega -infty is small}) and (\ref{eq:>0.5})
we have 

\[
\omega(x,Jx)\leqq d\lambda_{-\infty}(x,Jx)+C_{1}e^{\kappa_{1}r}\left(dr\wedge\lambda_{-\infty}+d\lambda_{-\infty}\right)(x,Jx).
\]
 When restricted to $J$-complex planes in $TW_{-R}$ for large $R,$
we get 

\begin{eqnarray}
\omega & \leqq & d\lambda_{-\infty}+C_{1}e^{\kappa_{1}r}\left(dr\wedge\lambda_{-\infty}+d\lambda_{-\infty}\right)\nonumber \\
 & \leqq & (1+C_{1}e^{\kappa_{1}r})\left(d\lambda_{-\infty}+\frac{C_{1}e^{\kappa_{1}r}}{1+C_{1}e^{\kappa_{1}r}}dr\wedge\lambda_{-\infty}\right)\nonumber \\
 & \leqq & C_{2}\left(d\lambda_{-\infty}+\frac{C_{1}e^{\kappa_{1}r}}{1+C_{1}e^{\kappa_{1}r}}dr\wedge\lambda_{-\infty}\right),\label{eq:omega<}
\end{eqnarray}
where $C_{2}=1+C_{1}e^{-\kappa_{1}R}<2.$

Similarly, for all $x\in TW_{-R}$ with $||x||_{g_{W_{-}}}=1,$ we
have

\begin{equation}
\left|\left(\sigma\wedge\lambda-dr\wedge\lambda_{-\infty}\right)(x,Jx)\right|\leqq\frac{1}{2}C_{1}e^{\kappa_{1}r}.\label{eq:sigma wedge lambda - dr wedge lambda infty}
\end{equation}
Hence when restricted to $J$-complex planes in $TW_{-R}$ for large
$R,$ by (\ref{eq:>0.5}) and (\ref{eq:sigma wedge lambda - dr wedge lambda infty})
we have 
\begin{eqnarray}
\sigma\wedge\lambda & \leqq & dr\wedge\lambda_{-\infty}+C_{1}e^{\kappa_{1}r}\left(dr\wedge\lambda_{-\infty}+d\lambda_{-\infty}\right)\nonumber \\
 & \leqq & C_{2}\left(\frac{C_{1}e^{\kappa_{1}r}}{1+C_{1}e^{\kappa_{1}r}}d\lambda_{-\infty}+dr\wedge\lambda_{-\infty}\right).\label{eq:sigma lambda}
\end{eqnarray}

On the other hand, since $\omega+\sigma\wedge\lambda$ is $J$ - positively
bounded away from $0,$ when restricted on $J$-complex planes in
$TW_{-R}$ for large $R,$ we get 
\begin{eqnarray}
\left|dr\wedge\lambda_{-\infty}\right| & \leqq & \left|dr\wedge\lambda_{-\infty}-\sigma\wedge\lambda\right|+\sigma\wedge\lambda\nonumber \\
 & \leqq & C_{1}e^{\kappa_{1}r}\left(\omega+\sigma\wedge\lambda\right)+\sigma\wedge\lambda\nonumber \\
 & \leqq & C_{1}e^{\kappa_{1}r}\omega+C_{2}\sigma\wedge\lambda\label{eq:dr lambda infty}
\end{eqnarray}
 and 
\begin{eqnarray}
\left|d\lambda_{-\infty}\right| & \leqq & \left|d\lambda_{-\infty}-\omega\right|+\omega\nonumber \\
 & \leqq & C_{1}e^{\kappa_{1}r}\left(\omega+\sigma\wedge\lambda\right)+\omega\nonumber \\
 & \leqq & C_{2}\omega+C_{1}e^{\kappa_{1}r}\sigma\wedge\lambda,\label{eq:d lambda infty}
\end{eqnarray}
 by modifying $C_{1}$ and $\kappa_{1}.$

Therefore, we have 

\begin{eqnarray}
 &  & \int_{\tilde{u}^{-1}(W_{-})}\tilde{u}^{*}\omega\nonumber \\
 & \leqq & \int_{\hat{A}}\tilde{u}^{*}\omega+\int_{\{\Sigma\backslash A\}\cap\tilde{u}^{-1}(W_{-})}\tilde{u}^{*}\omega\nonumber \\
 & \leqq & C_{2}\int_{\hat{A}}\tilde{u}^{*}\left(d\lambda_{-\infty}+\frac{C_{1}e^{\kappa_{1}r}}{1+C_{1}e^{\kappa_{1}r}}dr\wedge\lambda_{-\infty}\right)+\int_{\{\Sigma\backslash A\}\cap\tilde{u}^{-1}(W_{-})}\tilde{u}^{*}\omega\nonumber \\
 & = & C_{2}\int_{B_{1}}\tilde{u}^{*}\lambda_{-\infty}-C_{2}\int_{B_{2}}\tilde{u}^{*}\lambda_{-\infty}\nonumber \\
 &  & +C_{2}\int_{\hat{A}}\tilde{u}^{*}\left(\frac{C_{1}e^{\kappa_{1}r}}{1+C_{1}e^{\kappa_{1}r}}dr\wedge\lambda_{-\infty}\right)+\int_{\{\Sigma\backslash A\}\cap\tilde{u}^{-1}(W_{-})}\tilde{u}^{*}\omega.\label{eq:omega-1}
\end{eqnarray}

While,

\begin{eqnarray}
 &  & \left|\int_{\hat{A}}\tilde{u}^{*}\left(\frac{C_{1}e^{\kappa_{1}r}}{1+C_{1}e^{\kappa_{1}r}}dr\wedge\lambda_{-\infty}\right)\right|\nonumber \\
 & \leqq & \int_{\hat{A}}\left|\tilde{u}^{*}\left(\frac{C_{1}e^{\kappa_{1}r}}{1+C_{1}e^{\kappa_{1}r}}dr\wedge\lambda_{-\infty}\right)\right|\nonumber \\
 & \leqq & C_{1}\int_{\hat{A}}\left|\tilde{u}^{*}e^{\kappa_{1}r}\left(C_{1}e^{\kappa_{1}r}\omega+C_{2}\sigma\wedge\lambda\right)\right|\nonumber \\
 & \leqq & \frac{1}{4}E_{\omega}(\tilde{u}|_{W_{-}})+C_{1}C_{2}\kappa_{1}^{-1}e^{-\kappa_{1}\mathfrak{r}}\int_{\hat{A}}\tilde{u}^{*}\left(\kappa_{1}e^{\kappa_{1}(\mathfrak{r}+r)}\sigma\wedge\lambda\right).\label{eq:error error}
\end{eqnarray}
 Since $\int_{-\infty}^{-\mathfrak{r}}\kappa_{1}e^{\kappa_{1}(\mathfrak{r}+r)}dr=1,$
we have 
\[
\int_{\hat{A}}\tilde{u}^{*}\left(\kappa_{1}e^{\kappa_{1}(\mathfrak{r}+r)}\sigma\wedge\lambda\right)\leqq E_{\lambda}(\tilde{u}).
\]
 Therefore, by picking $R$ sufficiently large, we can make $\mathfrak{r}$
sufficiently large, and then (\ref{eq:error error}) implies 

\begin{equation}
\left|\int_{\hat{A}}\tilde{u}^{*}\left(\frac{C_{1}e^{\delta_{1}r}}{1+C_{1}e^{\delta_{1}r}}dr\wedge\lambda_{-\infty}\right)\right|\leqq\frac{1}{4}E_{\omega}(\tilde{u}|_{W_{-}})+\frac{1}{4}E_{\lambda}(\tilde{u}|_{W_{-}}).\label{eq:omega error}
\end{equation}

Let $\Phi(r)=\intop_{-\infty}^{r}\phi(t)dt,$ for $\phi\in\mathcal{C},$
and then we get

\begin{eqnarray*}
 &  & \int_{\tilde{u}^{-1}(W_{-})}\tilde{u}^{*}\left(\phi(r)\sigma\wedge\lambda\right)\\
 & \leqq & \int_{\hat{A}}\tilde{u}^{*}\left(\phi(r)\sigma\wedge\lambda\right)+\int_{\{\Sigma\backslash A\}\cap\tilde{u}^{-1}(W_{-})}\tilde{u}^{*}\left(\phi(r)\sigma\wedge\lambda\right)\\
 & \leqq & C_{2}\int_{\hat{A}}\tilde{u}^{*}\left(\phi(r)dr\wedge\lambda_{-\infty}+\frac{C_{1}e^{\kappa_{1}r}}{1+C_{1}e^{\kappa_{1}r}}\phi(r)d\lambda_{-\infty}\right)\\
 &  & +\int_{\{\Sigma\backslash A\}\cap\tilde{u}^{-1}(W_{-})}\tilde{u}^{*}\left(\phi(r)\sigma\wedge\lambda\right)\\
 & \leqq & C_{2}\int_{\hat{A}}\tilde{u}^{*}d(\Phi(r)\lambda_{-\infty})-C_{2}\int_{\hat{A}}\tilde{u}^{*}(\Phi(r)d\lambda_{-\infty})\\
 &  & +C_{2}C_{1}\int_{\hat{A}}\tilde{u}^{*}\left(\frac{e^{\kappa_{1}r}}{1+C_{1}e^{\kappa_{1}r}}\phi(r)d\lambda_{-\infty}\right)+\int_{\{\Sigma\backslash A\}\cap\tilde{u}^{-1}(W_{-})}\tilde{u}^{*}\left(\phi(r)\sigma\wedge\lambda\right)
\end{eqnarray*}

\begin{eqnarray}
 & \leqq & C_{2}\int_{\hat{A}}\tilde{u}^{*}d(\Phi(r)\lambda_{-\infty})-C_{2}\left\{ \int_{\hat{A}}\tilde{u}^{*}\left[\Phi(r)d\lambda_{-\infty}+\Phi(r)\frac{C_{1}e^{\kappa_{1}r}}{1+C_{1}e^{\kappa_{1}r}}dr\wedge\lambda_{-\infty}\right]\right\} \nonumber \\
 &  & +C_{2}C_{1}\int_{\hat{A}}\tilde{u}^{*}\left(\Phi(r)\frac{e^{\kappa_{1}r}}{1+C_{1}e^{\kappa_{1}r}}dr\wedge\lambda_{-\infty}\right)\nonumber \\
 &  & +C_{2}C_{1}\int_{\hat{A}}\tilde{u}^{*}\left(\frac{e^{\kappa_{1}r}}{1+C_{1}e^{\kappa_{1}r}}\phi(r)d\lambda_{-\infty}\right)\nonumber \\
 &  & +\int_{\{\Sigma\backslash A\}\cap\tilde{u}^{-1}(W_{-})}\tilde{u}^{*}\left(\phi(r)\sigma\wedge\lambda\right)\nonumber \\
 & \leqq & C_{2}\int_{\hat{A}}\tilde{u}^{*}d(\Phi(r)\lambda_{-\infty})+C_{2}C_{1}\int_{\hat{A}}\tilde{u}^{*}\left(\Phi(r)\frac{e^{\kappa_{1}r}}{1+C_{1}e^{\kappa_{1}r}}dr\wedge\lambda_{-\infty}\right)\nonumber \\
 &  & +C_{2}C_{1}\int_{\hat{A}}\tilde{u}^{*}\left(\frac{e^{\kappa_{1}r}}{1+C_{1}e^{\kappa_{1}r}}\phi(r)d\lambda_{-\infty}\right)+\int_{\{\Sigma\backslash A\}\cap\tilde{u}^{-1}(W)}\tilde{u}^{*}\left(\phi(r)\sigma\wedge\lambda\right)\nonumber \\
 & = & C_{2}\int_{B_{1}}\tilde{u}^{*}(\Phi(-\mathfrak{r})\lambda_{-\infty})+C_{2}C_{1}\int_{\hat{A}}\tilde{u}^{*}\left(\Phi(r)\frac{e^{\kappa_{1}r}}{1+C_{1}e^{\kappa_{1}r}}dr\wedge\lambda_{-\infty}\right)\nonumber \\
 &  & +C_{2}C_{1}\int_{\hat{A}}\tilde{u}^{*}\left(\frac{e^{\kappa_{1}r}}{1+C_{1}e^{\kappa_{1}r}}\phi(r)d\lambda_{-\infty}\right)\nonumber \\
 &  & +\int_{\{\Sigma\backslash A\}\cap\tilde{u}^{-1}(W_{-})}\tilde{u}^{*}\left(\phi(r)\sigma\wedge\lambda\right),\label{eq:sigma wedge lambda}
\end{eqnarray}
where the last inequality follows from the fact that $\Phi(r)d\lambda_{-\infty}+\Phi(r)\frac{C_{1}e^{\kappa_{1}r}}{1+C_{1}e^{\kappa_{1}r}}dr\wedge\lambda_{-\infty}$
is $J$- positive. 

While we have 

\begin{eqnarray}
 &  & \left|C_{2}C_{1}\int_{\hat{A}}\tilde{u}^{*}\left(\Phi(r)\frac{e^{\kappa_{1}r}}{1+C_{1}e^{\kappa_{1}r}}dr\wedge\lambda_{-\infty}\right)\right|\nonumber \\
 & \leqq & C_{2}C_{1}\int_{\hat{A}}\tilde{u}^{*}\left|e^{\kappa_{1}r}dr\wedge\lambda_{-\infty}\right|\label{eq:sigma wedge lambda error1}\\
 & \leqq & \frac{1}{4}E_{\omega}(\tilde{u}|_{W_{-}})+\frac{1}{4}E_{\lambda}(\tilde{u}|_{W_{-}}),\nonumber 
\end{eqnarray}
 and 

\begin{eqnarray}
 &  & \left|C_{2}C_{1}\int_{\hat{A}}\tilde{u}^{*}\left(\frac{e^{\kappa_{1}r}}{1+C_{1}e^{\kappa_{1}r}}\phi(r)d\lambda_{-\infty}\right)\right|\nonumber \\
 & \leqq & C_{2}C_{1}\int_{\hat{A}}\tilde{u}^{*}e^{\kappa_{1}r}\left(C_{2}\omega+C_{1}e^{\kappa_{1}r}\sigma\wedge\lambda\right)\label{eq:sigma wedge lambda error 2}\\
 & \leqq & \frac{1}{4}E_{\omega}(\tilde{u}|_{W_{-}})+\frac{1}{4}E_{\lambda}(\tilde{u}|_{W_{-}}).\nonumber 
\end{eqnarray}

Therefore, from (\ref{eq:omega-1}), (\ref{eq:omega error}), (\ref{eq:sigma wedge lambda}),
(\ref{eq:sigma wedge lambda error1}), and (\ref{eq:sigma wedge lambda error 2}),
we get 
\begin{eqnarray*}
E(\tilde{u}|_{W_{-}}) & := & E_{\omega}(\tilde{u}|_{W_{-}})+E_{\lambda}(\tilde{u}|_{W_{-}})\\
 & \leqq & 2C_{2}\int_{B_{1}}\tilde{u}^{*}\lambda_{-\infty}-C_{2}\int_{B_{2}}\tilde{u}^{*}\lambda_{-\infty}+\frac{3}{4}E_{\omega}(\tilde{u}|_{W_{-}})+\frac{3}{4}E_{\lambda}(\tilde{u}|_{W_{-}})\\
 &  & +\int_{\{\Sigma\backslash A\}\cap\tilde{u}^{-1}(W_{-})}\tilde{u}^{*}\omega+\int_{\{\Sigma\backslash A\}\cap\tilde{u}^{-1}(W_{-})}\tilde{u}^{*}\left(\phi(r)\sigma\wedge\lambda\right).
\end{eqnarray*}
Thus, 
\begin{eqnarray}
E(\tilde{u}|_{W_{-}}) & \leqq & 4C_{2}\left(2\int_{B_{1}}\tilde{u}^{*}\lambda_{-\infty}-\int_{B_{2}}\tilde{u}^{*}\lambda_{-\infty}\right)+4\int_{\{\Sigma\backslash A\}\cap\tilde{u}^{-1}(W_{-})}\tilde{u}^{*}\omega\nonumber \\
 &  & +4\int_{\{\Sigma\backslash A\}\cap\tilde{u}^{-1}(W_{-})}\tilde{u}^{*}\left(\phi(r)\sigma\wedge\lambda\right).\label{eq:E<}
\end{eqnarray}
Now we define a function $\tau$ by $\tau(r)=\frac{R+r}{R-\mathfrak{r}}$
for $-\mathfrak{r}\leqq r\leqq-R.$ Since $\tau(-\mathfrak{r})=1$
and $\tau(-R)=0,$ by Stokes' Theorem we get 
\begin{eqnarray}
\left|\intop_{B_{1}}\tilde{u}^{*}\lambda_{-\infty}\right| & = & \left|\int_{\{\Sigma\backslash A\}\cap\tilde{u}^{-1}([-\mathfrak{r},-R]\times V_{-})}\tilde{u}^{*}d\left(\tau(r)\lambda_{-\infty}\right)\right|\nonumber \\
 & \leqq & \int_{\{\Sigma\backslash A\}\cap\tilde{u}^{-1}([-\mathfrak{r},-R]\times V_{-})}\left|\tilde{u}^{*}d\left(\tau(r)\lambda_{-\infty}\right)\right|\nonumber \\
 & \leqq & C_{3}\int_{\{\Sigma\backslash A\}\cap\tilde{u}^{-1}([-\mathfrak{r},-R]\times V_{-})}\tilde{u}^{*}\omega'\label{eq:lambda infinity interior}\\
 & \leqq & C_{3}\int_{\{\Sigma\backslash A\}\cap\tilde{u}^{-1}(W_{-})}\tilde{u}^{*}\omega'\label{eq:lambda-3}
\end{eqnarray}
where $C_{3}$ is a constant depending on $R$, and the second inequality
follows from the fact that on any $J$-complex planes the symplectic
form $\omega'$ is positive. For the same reason, by modifying $C_{3}$
if necessary, we also have 
\begin{equation}
\int_{\{\Sigma\backslash A\}\cap\tilde{u}^{-1}(W_{-})}\tilde{u}^{*}\omega\leqq C_{3}\int_{\{\Sigma\backslash A\}\cap\tilde{u}^{-1}(W_{-})}\tilde{u}^{*}\omega'\label{eq:omega-2}
\end{equation}
 and 
\begin{equation}
\int_{\{\Sigma\backslash A\}\cap\tilde{u}^{-1}(W_{-})}\tilde{u}^{*}\left(\sigma\wedge\lambda\right)\leqq C_{3}\int_{\{\Sigma\backslash A\}\cap\tilde{u}^{-1}(W_{-})}\tilde{u}^{*}\omega'.\label{eq:sigma lambda -1}
\end{equation}
Then (\ref{eq:E<}), (\ref{eq:lambda-3}), (\ref{eq:omega-2}), (\ref{eq:sigma lambda -1}),
and $\int_{\{\Sigma\backslash A\}\cap\tilde{u}^{-1}(W_{-})}\tilde{u}^{*}\omega'\leqq E_{symp,2R}(\tilde{u})$
together imply 

\begin{equation}
E(\tilde{u}|_{W_{-}})\leqq C_{4}E_{symp,2R}(\tilde{u})-4C_{2}\sum\int\gamma_{-}^{*}\lambda_{-\infty},\label{eq:positive hofer energy}
\end{equation}
 where $C_{4}=8(C_{2}+1)C_{3}$ is a constant independent of $\tilde{u},$
and the summation is taken over all the periodic orbits $\gamma_{-}$'s
of $\mathbf{R}_{-\infty}$ to which $\tilde{u}$ converges at negative
infinity.

For positive end $W_{+},$ the estimates are very similar. The only
difference comes from the fact that the orientation of $V_{+}$ agrees
with the boundary orientation of $\{+\infty\}\times V_{+},$ and the
orientation of $V_{-}$ disagrees with the boundary orientation of
$\{-\infty\}\times V_{-}.$ One can easily adjust the above estimates
to $W_{+}$ case. For example, in (\ref{eq:omega-1}) the main part
is $\int_{\hat{A}}\tilde{u}^{*}d\lambda_{-\infty}=\int_{B_{1}}\tilde{u}^{*}\lambda_{-\infty}-\int_{B_{2}}\tilde{u}^{*}\lambda_{-\infty},$
and in $W_{+}$-version we replace it by 
\begin{eqnarray*}
\int_{\tilde{u}^{-1}(\hat{A}_{+})}\tilde{u}^{*}d\lambda_{+\infty} & = & \int_{B_{2+}}\tilde{u}^{*}\lambda_{+\infty}-\int_{B_{1+}}\tilde{u}^{*}\lambda_{+\infty},
\end{eqnarray*}
 where $B_{1+}:=\tilde{u}^{-1}(\{\mathfrak{r}_{+}\}\times V_{+})$
and $B_{2+}:=\tilde{u}^{-1}(\{+\infty\}\times V_{+});$ in (\ref{eq:sigma wedge lambda})
the main part is $\int_{\hat{A}}\tilde{u}^{*}d(\Phi(r)\lambda_{-\infty})=\int_{B_{1}}\tilde{u}^{*}(\Phi(-\mathfrak{r})\lambda_{-\infty}),$
and in $W_{+}$-version we replace it by 
\[
\int_{\tilde{u}^{-1}(\hat{A}_{+})}\tilde{u}^{*}d(\Phi_{+}(r)\lambda_{+\infty})\leqq\int_{B_{2+}}\tilde{u}^{*}\lambda_{+\infty}-\int_{B_{1+}}\tilde{u}^{*}(\Phi_{+}(\mathfrak{r}_{+})\lambda_{+\infty}).
\]
 Then a similar estimate as in (\ref{eq:lambda-3}) shows that all
the error terms including $-\int_{B_{1+}}\tilde{u}^{*}\lambda_{+\infty}$
and $-\int_{B_{1+}}\tilde{u}^{*}(\Phi_{+}(\mathfrak{r}_{+})\lambda_{+\infty})$
can be bounded by a multiple of $E_{symp,2R}(\tilde{u}).$ Indeed,
one can show that

\begin{equation}
E(\tilde{u}|_{W_{+}})=E_{\omega}(\tilde{u}|_{W_{+}})+E_{\lambda}(\tilde{u}|_{W_{+}})\leqq8C_{2}\sum\intop\gamma_{+}^{*}\lambda_{+\infty}+C_{4}E_{symp,2R}(\tilde{u}),\label{eq:negative hofer energy}
\end{equation}
where the summations are taken over all the periodic orbits $\gamma_{+}$'s
of $\mathbf{R}_{+\infty}$ to which $\tilde{u}$ converges at positive
infinity. 

By (\ref{eq:negative hofer energy}) and (\ref{eq:positive hofer energy}),
we have $E_{a}(\tilde{u})\leqq C\left(2\sum\intop\gamma_{+}^{*}\lambda_{+\infty}-\sum\int\gamma_{-}^{*}\lambda_{-\infty}\right)+C'E_{symp,a}(\tilde{u}),$
where $C=4C_{2}$ and $C'=2C_{4}.$ 
\end{proof}

\begin{proof}
(Theorem \ref{thm:(Gromov's-Monotonicity-Theorem}) We view $(M\backslash p,J)$
as an almost complex manifold with asymptotically cylindrical negative
end $W_{-}$ as described in the beginning of Section \ref{sec:Almost-complex-manifolds with ends},
with $W_{-}$ biholomorphic to $B_{r}(p)\backslash\{p\}.$ Notice
that all the estimates before formula (\ref{eq:lambda infinity interior})
in the proof of Theorem \ref{thm: hofer's energy bd by symplectic area-1}
are local, i.e. inside $W_{-}.$ Thus, we get 
\[
E(\tilde{u}|_{W_{-}})\leqq C_{4}\int_{\tilde{u}^{-1}(W_{-})}\tilde{u}^{*}\omega'-4C_{2}\sum\int\gamma_{-}^{*}\lambda_{-\infty}=C_{4}\int_{\tilde{u}^{-1}(W_{-})}\tilde{u}^{*}\omega'-4C_{2}(2k\pi).
\]
 From the fact that $E(\tilde{u}|_{W_{-}})>0,$ we get 
\[
\int_{\tilde{u}^{-1}(B_{r}(p))}\tilde{u}^{*}\omega'=\int_{\tilde{u}^{-1}(W_{-})}\tilde{u}^{*}\omega'>\frac{4C_{2}(2k\pi)}{C_{4}}.
\]
 Now we show that the constant $\frac{4C_{2}(2\pi)}{C_{4}}$ can be
chosen to be independent of $p.$ For each point $p\in M,$ we can
choose a Darboux chart whose size is uniformly bounded away from $0$
and the almost complex structure $J$ at $p$ coincides with the standard
one defined by $\frac{\partial}{\partial x}\mapsto\frac{\partial}{\partial y}$
and $\frac{\partial}{\partial y}\mapsto-\frac{\partial}{\partial x}.$
Identifying this neighborhood minus $p$ with the half infinite cylinder
as described in the beginning of Section \ref{sec:An-application-to},
we get (ACC1)-(ACC5) are satisfied with constants $K_{l}$ bounded
by the $C^{l}$-norm of $J$ and the norm of the Nijenhuis tensor
of $J$ (We only need $K_{0}$ in this theorem). Since we assume that
$M$ is compact, and $\omega'$ and $J$ are smooth, we can make $K_{l}$
independent of $p.$ Following the proofs of Theorem \ref{thm: hofer's energy bd by symplectic area-1}
and Theorem \ref{thm:(Gromov's-Monotonicity-Theorem} carefully, we
can see that the constant $C_{2}$ can be chosen to be close to $1$
and $C_{4}$ can be bounded using $K_{0}.$ Therefore, we can make
$\frac{4C_{2}(2\pi)}{C_{4}}$ independent of $p.$ \end{proof}

\end{document}